\def\thm@space@setup{%
	\thm@preskip=\parskip \thm@postskip=0pt
}
\theoremstyle{plain} \numberwithin{equation}{section}
\newtheorem{theorem}{Theorem}[section]
\newtheorem{corollary}[theorem]{Corollary}
\newtheorem*{conjecture*}{Conjecture}
\newtheorem{lemma}[theorem]{Lemma}
\newtheorem*{question*}{Question} \topmargin-2cm
\theoremstyle{definition}
\newtheorem{definition}[theorem]{Definition}
\newtheorem{remark}[theorem]{Remark}
\DeclareMathOperator{\Hessian}{Hess}
\DeclareMathOperator{\distance}{dist}
\renewcommand{\Re}{\operatorname{Re}}
\begin{document}
	
	\title{The Diederich--Forn\ae ss index I: for domains of non-trivial index}

	\author{
		Bingyuan Liu\\ bingyuan@ucr.edu
	}
	
	\date{March 9, 2017}

	\maketitle
	
	\begin{abstract}
We study bounded pseudoconvex domains in complex Euclidean space. We define an index associated to the boundary and show this new index is equivalent to the Diederich--Forn\ae ss index defined in 1977. This connects the Diederich--Forn\ae ss index to boundary conditions and refines the Levi pseudoconvexity. We also prove the $\beta$-worm domain is of index $\pi/{(2\beta)}$. It is the first time that a precise non-trivial Diederich--Forn\ae ss index in Euclidean spaces is obtained. This finding also indicates that the Diederich--Forn\ae ss index is a continuum in $(0,1]$, not a discrete set. The ideas of proof involve a new complex geometric analytic technique on the boundary and detailed estimates on differential equations.
	\end{abstract}
	
	\section{Introduction}\label{sec0}	
	The root of modern complex analysis is the theory of pseudoconvexity. In 1906, Hartogs discovered a surprising phenomenon in $\mathbb{C}^n$ for $n>1$ which can never appear in $\mathbb{C}$. He found that on some bounded domains in $\mathbb{C}^n$ for $n>1$, every holomorphic function can be holomorphically extended to a larger domain. After that, Levi and Hartogs studied this phenomenon and derived two different necessary and sufficient descriptions of the domains where this phenomenon does not hold. These two conditions are equivalent and are known as Levi pseudoconvexity and Hartogs pseudoconvexity. Their equivalence is represented in the following theorem.
	
	\begin{theorem}[\citep{Kr01}]
		Let $\Omega$ be a bounded domain with $C^2$ boundary in $\mathbb{C}^n$. Let \[\delta(z):=\begin{cases}
		-\distance(z, \partial\Omega) & z\in\Omega\\
		\distance(z, \partial\Omega) & \text{otherwise}.
		\end{cases}\]  be a \textit{signed distance function} of $\Omega$. Then the two following statements are equivalent:
		\begin{enumerate}
			\item The domain $\Omega$ is Hartogs pseudoconvex, i.e., $-\log(-\delta)$ is plurisubharmonic in $\Omega$.
			\item The domain $\Omega$ is Levi pseudoconvex, i.e., $\Hessian_\delta (L, L)\geq 0$ for all $(1,0)$ tangent vector field $L$ of $\partial\Omega$.
		\end{enumerate}
	\end{theorem}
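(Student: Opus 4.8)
The plan is to compare the complex Hessian of $u:=-\log(-\delta)$ with the Levi form of $\delta$ itself. Near $\partial\Omega$ the signed distance function is $C^2$ and satisfies $|\nabla\delta|\equiv 1$; differentiating twice gives, for $z$ in a one-sided collar $\{-\epsilon_0<\delta<0\}$ and any $w\in\mathbb{C}^n$,
\[
\sum_{j,k}\partial_j\partial_{\bar k}u(z)\,w_j\bar w_k=\frac{1}{|\delta(z)|}\sum_{j,k}\partial_j\partial_{\bar k}\delta(z)\,w_j\bar w_k+\frac{1}{\delta(z)^2}\Bigl|\sum_j\partial_j\delta(z)\,w_j\Bigr|^2 .
\]
Both coefficients are positive (since $\delta<0$ in $\Omega$), and the rank-one term vanishes precisely when $w$ is a $(1,0)$ vector tangent to the level hypersurface $\{\delta=\delta(z)\}$, i.e.\ when $\sum_j\partial_j\delta\,w_j=0$. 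This identity drives both implications.

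\emph{$(1)\Rightarrow(2)$.} Fix $p\in\partial\Omega$ and a $(1,0)$ tangent vector $L_0$ of $\partial\Omega$ at $p$. Since $\partial\delta\ne0$ near $\partial\Omega$, extend $L_0$ to a smooth $(1,0)$ vector field $L$ near $p$ that is tangent to \emph{every} level set of $\delta$, i.e.\ $\sum_jL_j\,\partial_j\delta\equiv0$. Evaluating the identity at interior points $z\in\Omega$ near $p$ kills the rank-one term, so plurisubharmonicity of $u$ forces $\sum_{j,k}\partial_j\partial_{\bar k}\delta(z)\,L_j(z)\overline{L_k(z)}\ge0$; letting $z\to p$ along $\Omega$ and using continuity of the second derivatives of $\delta$ gives $\Hessian_\delta(L_0,L_0)(p)\ge0$.

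\emph{$(2)\Rightarrow(1)$.} I would argue in three steps. \textbf{Step 1:} show $u+C|z|^2$ is strictly plurisubharmonic on $\{-\epsilon_1<\delta<0\}$ for $C$ large and $\epsilon_1\le\epsilon_0$. Split $w=w^t+w^\nu$, with $w^t$ tangent to $\{\delta=\delta(z)\}$ and $w^\nu$ a multiple of $\overline{\partial\delta(z)}$. The tangential contribution $|\delta(z)|^{-1}\sum\partial_j\partial_{\bar k}\delta\,w^t_j\overline{w^t_k}$ is $\ge0$: this is where Levi pseudoconvexity enters, but on the \emph{interior} level sets, not only on $\partial\Omega$. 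I would establish that $\{\delta<-t\}$ is again Levi pseudoconvex for small $t>0$ from the matrix Riccati equation satisfied by $\Hessian_\delta$ along the inward normals of $\partial\Omega$ — along which $\nabla\delta$ is constant, so complex tangent directions are preserved — which makes the tangential Levi form monotone nondecreasing as one moves inward. The $|\delta|^{-2}$ term blows up at $\partial\Omega$ and, after absorbing the mixed terms by Cauchy--Schwarz into $C|z|^2$ and into $|\delta|^{-2}\bigl|\sum\partial_j\delta\,w_j\bigr|^2$, dominates everything once $|\delta|$ is small. \textbf{Step 2:} since $\Omega$ is bounded and $u+C|z|^2\to+\infty$ at $\partial\Omega$, gluing $u+C|z|^2$ (valid near $\partial\Omega$) to a large constant away from $\partial\Omega$ via a maximum produces a continuous plurisubharmonic exhaustion of $\Omega$. \textbf{Step 3:} upgrade to the claim that $u$ itself is plurisubharmonic on \emph{all} of $\Omega$ — necessary because $\delta$ loses $C^2$-regularity on the medial axis. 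Use
\[
u(z)=-\log\distance(z,\partial\Omega)=\sup_{|a|=1}\bigl(-\log\delta_a(z)\bigr),\qquad \delta_a(z):=\inf\{\,|\lambda|:z+\lambda a\notin\Omega\,\},
\]
valid on all of $\Omega$. For fixed $a$, $-\log\delta_a$ restricted to a complex line parallel to $a$ is the negative logarithm of the boundary distance of a planar open set, hence subharmonic; and a Hartogs-figure argument — exploiting that $\Omega$, now known to carry a plurisubharmonic exhaustion, is a domain of holomorphy and satisfies the continuity principle (Kontinuit\"atssatz) — shows $-\log\delta_a$ is plurisubharmonic on $\Omega$. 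A supremum of plurisubharmonic functions is plurisubharmonic, here because it is continuous; this is $(1)$.

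The substance lies entirely in $(2)\Rightarrow(1)$: the normal-line/Riccati analysis transporting the Levi condition from $\partial\Omega$ to the interior level sets — so that the tangential term has the right sign \emph{exactly}, a merely approximate inequality being useless against the factor $|\delta|^{-1}$ — and the globalization across the medial axis in Step 3. The first-direction computation and the exhaustion-gluing are routine by comparison.
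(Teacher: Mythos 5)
This statement is quoted in the paper as a classical theorem with a citation to [Kr01]; the paper itself contains no proof, so your argument can only be measured against the standard textbook proofs, and by that measure it is essentially correct. Your identity for the complex Hessian of $-\log(-\delta)$, the $(1)\Rightarrow(2)$ direction, the collar estimate with $C|z|^2$, the gluing to a continuous plurisubharmonic exhaustion, and the final passage through $\delta_a$ and the continuity principle to handle the loss of $C^2$ regularity of $\delta$ on the medial axis are all the classical ingredients (Oka's lemma plus the bounded-domain globalization). The one place where you depart from the usual route is in transporting Levi pseudoconvexity from $\partial\Omega$ to the interior level sets: the standard proofs do this by a Taylor-expansion/analytic-disc contradiction at a nearest boundary point, whereas you invoke the matrix Riccati equation for $\Hessian\delta$ along inward normals. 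That conclusion (monotonicity of the tangential Hessian on the fixed hyperplane $\nabla\delta(p)^{\perp}$, hence of the Levi form on the fixed complex tangent space) is true, but as stated it presupposes differentiating the Hessian along the normal, which a $C^2$ boundary does not directly permit; you should instead differentiate the identity $\nabla\delta\big(x-t\nabla\delta(x)\big)=\nabla\delta(x)$ in $x$, which needs only $C^2$ and yields $\Hessian\delta$ at the inner point equal to $H(I-tH)^{-1}$ with $H$ the boundary Hessian, from which monotonicity follows. Two smaller remarks: the appeal to ``$\Omega$ is a domain of holomorphy'' in Step 3 is unnecessary and heavier than needed (it would drag in the Levi problem); the Kontinuit\"atssatz, which follows directly from the exhaustion you built in Step 2, already suffices for the Hartogs-figure argument, as you yourself note. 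With those points tightened, the proposal is a complete and correct proof, somewhat more computational in the $(2)\Rightarrow(1)$ collar step than the citation's disc-based argument, but with the advantage of making the inward propagation of the Levi form explicit and quantitative.
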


The theorem essentially states how a boundary condition affects the nature of domains. It was soon discovered that the pseudoconvexity is definitive to many other fundamental problems. In complex Hodge theory, Garabedian--Spencer \citep{GS52} suggested the $\bar{\partial}$-Neumann problem on bounded pseudoconvex domains with smooth boundary. The $L^2$ existence theory and the global regularity of $\bar{\partial}$-Neumann problem on strongly pseudoconvex domains (i.e. $\Hessian_\delta (L, L)$ strictly greater than $0$ everywhere) were solved by Kohn \citep{Ko63}, \citep{Ko64} and H\"{o}rmander \citep{Ho65} during 1940s-1960s. However, the global regularity for many other pseudoconvex domains (with smooth boundary) is still lacking a complete answer (see Boas--Straube \citep{BS99}). In particular, the $\bar{\partial}$-Neumann operator may or may not preserve the sobolev space $W^s_{(p, q)}(\overline{\Omega})$ of $(p,q)$-forms for all $s>0$ (see Barrett \citep{Ba92},  Kiselman \citep{Ki91} and Boas--Straube \citep{BS93}) even under the category of smoothly bounded pseudoconvexity. This demands a refinement of pseudoconvexity to further classify the (weakly) pseudoconvex domains.

In 1977, Diederich--Forn\ae ss proved a celebrated theorem in \citep{DF77b}. The idea of the theorem is to replace the classical $\log$ composition in the notion of Hartogs pseudoconvexity with a power function composition. This classifies the notion of pseudoconvexity in the detail.
\begin{definition}\label{df}
	Let $\Omega$ be a bounded pseudoconvex domain with $C^2$ boundary in $\mathbb{C}^n$. The function $\rho$ is called a \textit{defining function} if following conditions are satisfied:
	\begin{enumerate}
		\item the function $\rho$ is $C^2$ on a neighborhood of $\overline{\Omega}$,
		\item the domain $\Omega=\lbrace z\in\mathbb{C}^n: \rho(z)<0 \rbrace$, and 
		\item the gradient $\nabla\rho\neq 0$ on $\partial\Omega$.
	\end{enumerate}
	The number 
	$0 < \tau_\rho < 1$ is called a \textit{Diederich-Forn\ae ss exponent} if there exists a defining 
	function $\rho$ of $\Omega$ so that $-(-\rho)^{\tau_\rho}$ is plurisubharmonic in $\Omega$. 
	The index 
	\[\eta:=\sup \tau_\rho ,\] 
	where the supremum is taken over all defining functions of $\Omega$, is 
	called the \textit{Diederich-Forn\ae ss index} of the domain $\Omega$ (see Chen--Fu \citep{CF11}). 
\end{definition}

The index does a fundamental job to refine the notion of Hartogs pseudoconvexity in terms of the Sobolev regularity for the $\bar{\partial}$-Neumann operator and Bergman projection. For example, Berndtsson--Charpentier showed in \citep{BC00}, that the $\bar{\partial}$-Neumann operator and the Bergman projection preserves $W^k(\overline{\Omega})$ for $k<\eta_0/2$ when the Diederich--Forn\ae ss index is $\eta_0$. However, the Diederich--Forn\ae ss index has not been understood thoroughly because the verifications and computations are rather difficult. In this paper, we understand the index by refining the notion of Levi pseudoconvexity. In other words, we define a natural index of the boundary and show this new index is equal to the Diederich--Forn\ae ss index. This is given by Theorem \ref{equiv}. This theorem can be thought of as a refinement of the equivalence of Hartogs pseudoconvexity and Levi pseudoconvexity. 

To prove the theorem, we first simplify some common quantities and establish several useful identities. Many of the simplifications are motivated by the geometric analysis for Riemannian and K\"{a}hler manifolds. We also pass the plurisubharmonicity to the study of quadratic equations. Using the useful identities and the discriminant for quadratic equations, we are able to obtain the theorem of equivalence. The ideas of proof involve new complex analytic techniques on the boundary.

By the completeness of Diederich--Forn\ae ss index in terms of Levi's notion, we understand the Diederich--Forn\ae ss index is fundamental to the domains in complex analysis. Consequently, we will say the $\Omega$ is a \textit{pseudoconvex domain of index $\eta_0$} when the Diederich--Forn\ae ss index of $\Omega$ is $\eta_0$. If $\eta_0\in (0,1)$, we say that the $\Omega$ is of \textit{non-trivial index}. Otherwise, we say it is of \textit{trivial index}. The Diederich--Forn\ae ss index gives an alternative classification to the classical notions (e.g. strongly pseudoconvex, finite type, et al.).

The theorem of equivalence makes the computation of the index simpler. As one application, we will compute the index of $\beta$-worm domains (see Definition \ref{worm}). The $\beta$-worm domains $\Omega_\beta$ have been introduced by Diederich--Forn\ae ss in \citep{DF77a}. These domains are the most famous pseudoconvex domains with smooth boundary on which the $\bar{\partial}$-Neumann operators and Bergman projections do not preserve $W^k(\overline{\Omega}_\beta)$ for all $k>0$. Indeed,  the non-preservation on $W^k(\overline{\Omega}_\beta)$ for $k\geq\pi/{(2\beta-\pi)}$ was showed by Barrett in 1992 (see \citep{Ba92}). On the other hand, the domains $\Omega_\beta$ are the only known domains of non-trivial index in $\mathbb{C}^n$. Since the 1990s, it has been known that the index of $\Omega_\beta$ is less or equal to $2\pi/(2\beta-\pi)$. In 2008, Krantz--Peloso \citep{KP08} improved this upper bound to $\pi/(2\beta-\pi)$. As an application of Theorem \ref{equiv}, we improve the upper bound of Diederich--Forn\ae ss index of $\Omega_\beta$ to $\pi/(2\beta)$. We also show this bound is sharp. In other words, we determine that $\Omega_\beta$ is of index $\pi/(2\beta)$. This is the first time that an accurate non-trivial Diederich--Forn\ae ss index in Euclidean spaces is obtained. See Theorem \ref{calcworm}. 

To find the Diederich--Forn\ae ss index of $\beta$-worm domains, we pass the Diederich--Forn\ae ss index to the solvability of a partial differential equation defined on an annulus. This requires the use of our new index. We then associate the solvability of the aforementioned partial differential equation with the solvability of the famous Riccati equations. Consequently, the Diederich--Forn\ae ss indexes of $\beta$-worm domains are found using the comparison principal of ordinary differential equations. 

We remark that in the case of complex manifolds, Diederich--Ohsawa in \citep{DO85} and \citep{DO07} have constructed a domain with Levi-flat boundary. This domain is known to have the Diederich--Forn\ae ss index $0.5$ by different arguments involving Adachi--Brinkschulte \citep{AB14}, Fu--Shaw \citep{FS14} (see \citep{FS14} for other estimates) and Example 4.5 of Adachi \citep{Ad15}. This type of domain, however does not exist in Euclidean spaces because of the nonexistence of domains with Levi-flat boundary in $\mathbb{C}^n$. 

Since 1977, many mathematicians have been working on this topic. Here are a few important works which we have not mentioned: Demailly \citep{De87}, Forn\ae ss--Herbig \citep{FH08}, Herbig--McNeal \citep{HM12b}, Harrington \citep{Ha08}, and Range \citep{Ra81}.  

 The outline of the paper is as the following. After some preparation in preliminaries, we define the Diederich--Forn\ae ss index of hypersurface in Section \ref{sec2} (see Definition \ref{dfb}). We also show this new index is equivalent to the Diederich--Forn\ae ss index defined above. This equivalence is proved in Theorem \ref{equiv}. In Section \ref{sec3}, we study the $\beta$-worm domains $\Omega_\beta$. Using Theorem \ref{equiv}, we find the exact Diederich--Forn\ae ss index of $\Omega_\beta$ in Theorem \ref{calcworm}. This also shows that the Diederich--Forn\ae ss index is continuum, not discrete.

In this paper, all smoothness can be replaced with $C^3$.

	\section{Preliminaries}\label{sec1}
	
	In this paper, we use the following terminology. Let $\Omega$ be a bounded pseudoconvex domain with smooth boundary in $\mathbb{C}^n$. Recall $\delta$ is the signed distance function of $\Omega$ defined in Section \ref{sec0}. We  denote a fixed defining function $\Omega$ by $r$. We let $\rho$ be an arbitrary defining function of $\Omega$. It is well-known that $\rho=re^\psi$ for some smooth $\psi$ defined in a neighborhood of $\Omega$. We fix this terminology and will not mention the relation of $\delta$, $r$ and $\psi$ in the rest of article. We denote a tubular neighborhood of $\partial\Omega$ by $U$. We also use terminology from K\"{a}hler geometry. The notation $g$ stands for the standard Euclidean metric of $\mathbb{C}^n$ and $\Hessian_r(L, N)=g(\nabla_L\nabla r, N)$ stands for Hessian with this metric.
	
Let $\Omega$ be a pseudoconvex domain with smooth boundary defined by $r$. We define 
\[N_r=\frac{1}{\sqrt{\sum_{j=1}^{n}|\frac{\partial r}{\partial z_j}|^2}}\sum_{j=1}^{n}\frac{\partial r}{\partial \bar{z}_j}\frac{\partial}{\partial z_j}.\] If we replace $r$ by $\delta$ in the definition above, we can also define the tangent field \[N_\delta=\frac{1}{\sqrt{\sum_{j=1}^{n}|\frac{\partial \delta}{\partial z_j}|^2}}\sum_{j=1}^{n}\frac{\partial \delta}{\partial \bar{z}_j}\frac{\partial}{\partial z_j}\]. 

The reader can check that $N_r$ and $N_\delta$ have the following properties:
\begin{enumerate}
	\item $N_r=N_\delta$ on $\partial\Omega$.
	\item $\sqrt{2}N_\delta$ is unit vector in $\mathbb{C}^n$.
	\item $N_\delta+\overline{N}_\delta=\nabla\delta$ in $\mathbb{C}^n$.
	\item $N_r+\overline{N}_r=\nabla\delta$ on $\partial\Omega$.
	\item $N_\delta\delta =\frac{1}{2}$ and $N_r r=\frac{\|\nabla r\|}{2}$.
\end{enumerate}
		 
		 Here, the notation $\nabla$ stands for the gradient in terms of the Euclidean metric. In this article, whenever we mention $\lim\limits_{z\to p}$, it means $z$ approaches $p$ from the normal direction. We will frequently use the following two basic results.

		 \begin{lemma}\label{basic}
		 	Let $\Omega$ be a bounded pseudoconvex domain with smooth boundary in $\mathbb{C}^n$. Let $L$ be a smooth $(1,0)$-tangent vector field on $\partial\Omega$. 
		 	\begin{enumerate}
		 		\item Then at $p\in \partial\Omega$ \[	\lim\limits_{z\to p}\frac{\overline{L}\rho}{-\rho}=\frac{N_r(\overline{L}\rho)}{-N_r\rho}.\]
		 		\item Assume $\Hessian_\rho(L, L)=0$ at $p\in \partial\Omega$. Then at $p$,	 		\[\begin{split}
		 		&
		 		\lim\limits_{z\to p}\frac{\Hessian_\rho(L, L)}{-\rho}=\frac{N_r\Hessian_\rho(L, L)}{-N_r\rho}.
		 		\end{split}
		 		\] 
		 	\end{enumerate} 
		 	
		 \end{lemma}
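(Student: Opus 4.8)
The plan is to reduce both statements to the elementary fact that, along the normal line through $p$, the ratio of a function vanishing on $\partial\Omega$ to the defining function $\rho$ is governed by L'Hôpital's rule in the normal direction. Concretely, both $\overline{L}\rho$ (part 1) and $\Hessian_\rho(L,L)$ under the hypothesis $\Hessian_\rho(L,L)(p)=0$ (part 2) are continuous functions that vanish at $p$; indeed $\overline{L}\rho=0$ on $\partial\Omega$ since $L$ is tangential, so in a neighborhood of $p$ on $\partial\Omega$ it vanishes identically — but more importantly along the normal segment we can Taylor-expand. Write a point near $p$ as $z=z(t)=p+t\,\nu(p)$ where $\nu=\nabla\delta$ is the outward unit normal (parametrizing the normal approach $z\to p$), so $t=\delta(z)$ to first order. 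Then for any smooth $h$ with $h(p)=0$,
\[
\lim_{z\to p}\frac{h(z)}{-\rho(z)}=\lim_{t\to 0^-}\frac{h(z(t))}{-\rho(z(t))}=\frac{\frac{d}{dt}\big|_{0}h(z(t))}{-\frac{d}{dt}\big|_{0}\rho(z(t))}=\frac{(\nabla\delta\cdot\nabla h)(p)}{-(\nabla\delta\cdot\nabla\rho)(p)},
\]
using L'Hôpital once (the denominator derivative $-\nabla\delta\cdot\nabla\rho=-\|\nabla\rho\|\ne 0$ at $p$ is nonzero, and $\rho(p)=0$). It remains to identify the real directional derivative $\nabla\delta\cdot\nabla h$ with $2\operatorname{Re} N_r h$ and, since $N_rh+\overline{N_r}h$ is real, to observe that when $h=\overline{L}\rho$ the derivative along $\nabla\delta$ picks out $N_r(\overline{L}\rho)$ and when $h=\Hessian_\rho(L,L)$ (a real-valued function) it picks out $2\operatorname{Re}N_r\Hessian_\rho(L,L)=N_r\Hessian_\rho(L,L)$; dividing numerator and denominator by $\nabla\delta\cdot\nabla\rho=2\operatorname{Re}N_r\rho=2N_r r$ on $\partial\Omega$ (property 5, since $N_rr=\|\nabla r\|/2$ is real) and simplifying gives the stated $N_r$-quotients. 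The properties $N_r=N_\delta$ on $\partial\Omega$, $N_\delta+\overline N_\delta=\nabla\delta$, and $N_rr=\|\nabla r\|/2$ listed after the definition of $N_r$ are exactly what is needed to convert the real gradient pairings into the holomorphic-normal notation.

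For part 1 there is a subtlety worth spelling out: $\overline{L}\rho$ vanishes on all of $\partial\Omega$ near $p$, not merely at $p$, so in principle the limit could be computed by differentiating tangentially as well; but since we approach along the normal, only the normal derivative enters, which is $N_r(\overline{L}\rho)+\overline{N_r}(\overline{L}\rho)$, and one checks using $L$ tangential that the antiholomorphic piece contributes in a way consistent with the formula after dividing by $N_r\rho+\overline{N_r}\rho$ — the cleanest route is to note that the quotient $\overline{L}\rho/(-\rho)$ is a smooth function up to the boundary away from the zero set issues (here one uses that $\overline{L}\rho$ is $O(\delta)$), extend it, and evaluate its boundary value by the normal-derivative L'Hôpital computation above. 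For part 2 the hypothesis $\Hessian_\rho(L,L)(p)=0$ is what makes the numerator vanish at $p$ so that L'Hôpital applies; without it the limit would be $+\infty$ by pseudoconvexity, so this hypothesis is essential and is used precisely at the step where we assert $h(p)=0$.

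The main obstacle I anticipate is not the L'Hôpital mechanics but the careful bookkeeping translating between the real Euclidean gradient $\nabla$ and the complex field $N_r$: one must be attentive that $N_r$ is a $(1,0)$-field, that $N_r+\overline{N_r}=\nabla\delta$ holds only on $\partial\Omega$ for $r$ (property 4) whereas it holds in $U$ for $\delta$ (property 3), and that $\rho=re^\psi$ so the identities for $N_\rho$ versus $N_r$ differ by factors of $e^\psi$ and $\|\nabla\rho\|$ — these factors must cancel correctly between numerator and denominator for the final formulas to come out with $N_r$ and $\rho$ (not $N_\rho$). A secondary technical point is justifying the interchange of the limit with differentiation, i.e. that the normal approach indeed realizes the one-sided derivative; this follows from smoothness of $\delta$ near $\partial\Omega$ (so the normal foliation is smooth) together with $\rho\in C^2$, and amounts to a standard application of Taylor's theorem with remainder along the normal segment.
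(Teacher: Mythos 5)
Your overall strategy --- compute the limit along the normal approach as a one-sided difference quotient (L'H\^opital), so that at $p$ it becomes $\frac{(N_r+\overline{N}_r)h}{-(N_r+\overline{N}_r)\rho}$ --- is the same as the paper's. The gap is in the step where you convert these $(N_r+\overline{N}_r)$-derivatives into pure $N_r$-derivatives, which is the entire content of the lemma. For part 1 you assert that the normal derivative of $\overline{L}\rho$ ``picks out'' $N_r(\overline{L}\rho)$ and that ``the antiholomorphic piece contributes in a way consistent with the formula''; but the statement $\overline{N}_r(\overline{L}\rho)=N_r(\overline{L}\rho)$ at $p$ is precisely what must be proved, and you never prove it. The paper's one-line justification is: $\overline{L}\rho$ vanishes identically on $\partial\Omega$, and $N_r-\overline{N}_r$ is tangent to $\partial\Omega$, so $(N_r-\overline{N}_r)(\overline{L}\rho)=0$ there; combined with $(N_r-\overline{N}_r)\rho=0$ on $\partial\Omega$, the quotient $\frac{(N_r+\overline{N}_r)(\overline{L}\rho)}{-(N_r+\overline{N}_r)\rho}$ becomes $\frac{2N_r(\overline{L}\rho)}{-2N_r\rho}$. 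Your alternative ``cleanest route'' (extend $\overline{L}\rho/(-\rho)$ to the boundary and evaluate it by the same normal-derivative computation) is circular and does not supply this identity.

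For part 2 the gap is sharper: you use ``$2\Re N_r\Hessian_\rho(L,L)=N_r\Hessian_\rho(L,L)$'' as if it were automatic for real-valued functions. It is not: for real $h$ one only has $(N_r+\overline{N}_r)h=2\Re(N_rh)$, and $N_rh$ need not be real (take $h=\Im z_1$). The identity actually needed, $N_r\Hessian_\rho(L,L)=\overline{N}_r\Hessian_\rho(L,L)$ at $p$, i.e.\ the vanishing of the tangential derivative $(N_r-\overline{N}_r)\Hessian_\rho(L,L)$ at $p$, is where the hypothesis $\Hessian_\rho(L,L)(p)=0$ is used a second time: by pseudoconvexity $\Hessian_\rho(L,L)\geq 0$ on $\partial\Omega$, so $p$ is a boundary minimum and every tangential derivative of $\Hessian_\rho(L,L)$ vanishes there (this is what the paper's ``for the same reason'' refers to). Hence your remark that the hypothesis enters ``precisely at the step where we assert $h(p)=0$'' mislocates its essential role, and without this input the formula you would actually obtain is $\frac{\Re\bigl(N_r\Hessian_\rho(L,L)\bigr)}{-N_r\rho}$, not the stated one. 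A minor further slip: $(N_r+\overline{N}_r)\rho=2N_r\rho=2e^{\psi}N_rr$ on $\partial\Omega$, not $2N_rr$; this does not affect the final statement but is the kind of bookkeeping error you yourself flagged as the danger point.
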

	 
\begin{proof}
	On $\partial\Omega$, $N_r-\overline{N}_r$ is tangent to $\partial\Omega$. Consequently we have that $(N-\overline{N}_r)\overline{L}\rho=0$. This implies $N_r(\overline{L}\rho)=\overline{N}_r(\overline{L}\rho)$. 
	
	We are going to show the first conclusion. For the same reason, if $\Hessian_\rho(L, L)=0$ at $p\in \partial\Omega$, we have that $N_r(\Hessian_\rho(L, L))=\overline{N}_r(\Hessian_\rho(L, L))$.
	
	Since $\overline{L}\rho=0$ on $\partial\Omega$, then \[	\lim\limits_{z\to p}\frac{\overline{L}\rho}{-\rho}=\lim\limits_{z\to p}\frac{\overline{L}\rho|_z-\overline{L}\rho|_p}{-\rho|_z+\rho|_p}=\frac{(N_r+\overline{N}_r)(\overline{L}\rho)}{-(N_r+\overline{N}_r)\rho}\Bigg|_p=\frac{N_r(\overline{L}\rho)}{-N_r\rho}\Bigg|_p.\] 
	
	Similarly, we can also obtain the second conclusion.
\end{proof}
 
 \begin{lemma}\label{basic2}
 	Let $\Omega$ be a bounded pseudoconvex domain with smooth boundary in $\mathbb{C}^n$. Suppose $L$ is a $(1,0)$ tangent vector field so that $\Hessian_r(L, L)=0$ at $p\in \partial\Omega$. Assume, $T_j$ for $1\leq j\leq n-2$ are $(1,0)$ tangent vector fields and $L, T_1, T_2, \dots, T_{n-2}$ are orthogonal at $p$. Then $\Hessian_\delta (L, T_j)=0$ for $1\leq j\leq n-2$ at $p$.
 \end{lemma}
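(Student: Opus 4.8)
The plan is to reduce the assertion to the elementary linear–algebra fact that a vector lying on the null cone of a positive semidefinite Hermitian form lies in its radical, and then to transfer that vanishing between the two defining functions $r$ and $\delta$.

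First I would pass from $r$ to $\delta$ at the point $p$. Since $r$ and $\delta$ are both defining functions of $\Omega$, we may write $r=h\delta$ with $h$ smooth and $h>0$ near $\partial\Omega$. Expanding $\frac{\partial^2 r}{\partial z_a\partial \bar z_b}=h_{a\bar b}\delta+h_a\delta_{\bar b}+h_{\bar b}\delta_a+h\,\delta_{a\bar b}$ and restricting to $\partial\Omega$, where $\delta=0$, the only term that survives when contracted against $(1,0)$ tangent vectors is $h\,\delta_{a\bar b}$, because every other contribution carries a factor $L\delta$ or $\overline{T_j}\delta$ (both zero, since $L$ and the $T_j$ are tangent) or a factor $\delta$. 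Hence at $p$ one has $\Hessian_r(L,L)=h(p)\,\Hessian_\delta(L,L)$ and $\Hessian_r(L,T_j)=h(p)\,\Hessian_\delta(L,T_j)$; in particular the hypothesis $\Hessian_r(L,L)=0$ is equivalent to $\Hessian_\delta(L,L)=0$ at $p$, and it suffices to prove $\Hessian_\delta(L,T_j)=0$ at $p$.

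Next, since $\Omega$ is pseudoconvex, the Levi-form condition recalled at the start of the introduction says that the restriction of $\Hessian_\delta(\cdot,\cdot)$ to the holomorphic tangent space $T^{1,0}_p(\partial\Omega)$ is a positive semidefinite Hermitian form. The vector $L$ satisfies $\Hessian_\delta(L,L)=0$, so by the Cauchy--Schwarz inequality for positive semidefinite Hermitian forms $L$ lies in the radical: for any $(1,0)$ tangent vector $W$ and any $t\in\mathbb{C}$, positivity of $\Hessian_\delta(L+tW,L+tW)=2\Re\bigl(\bar t\,\Hessian_\delta(L,W)\bigr)+|t|^2\Hessian_\delta(W,W)$ forces $\Hessian_\delta(L,W)=0$ (take $t$ a small negative multiple of $\overline{\Hessian_\delta(L,W)}$). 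Applying this with $W=T_j$, which is tangent by hypothesis, gives $\Hessian_\delta(L,T_j)=0$ at $p$ for every $1\le j\le n-2$, which is the claim. (The orthogonality of $L,T_1,\dots,T_{n-2}$ plays no role in this argument; it is recorded because the lemma will be invoked with these vectors completed to an orthonormal frame of $T^{1,0}_p(\partial\Omega)$.)

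There is essentially no obstacle here: the substance is the standard fact about null vectors of semidefinite forms together with pseudoconvexity. The only point requiring a little care is the first step — checking that, after restriction to $\partial\Omega$, $\Hessian_r$ and $\Hessian_\delta$ agree on tangential directions up to the positive conformal factor $h$, so that the vanishing hypothesis stated for $r$ may legitimately be used for $\delta$ (and, conversely, the conclusion could equally be stated for $r$).
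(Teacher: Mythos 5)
Your proof is correct and follows essentially the same route as the paper: pseudoconvexity makes the Levi form positive semidefinite on the holomorphic tangent space at $p$, and a null vector of a positive semidefinite Hermitian form kills all off-diagonal entries (the paper phrases this via nonnegativity of the $2\times 2$ principal minor, you via the perturbation $L+tW$; these are the same Cauchy--Schwarz fact). Your preliminary step identifying $\Hessian_r$ and $\Hessian_\delta$ on tangential directions at $p$ up to the positive factor $h$ is a welcome extra care, since the paper's own proof works with $r$ throughout while the statement is phrased with $\delta$.
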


\begin{proof}
	If $L=0$, the lemma obviously holds. We assume $L\neq 0$. Without loss of generality, we assume $\lbrace L, T_1, T_2, \dots, T_{n-2}\rbrace$ is an orthonormal basis at $p$. The pseudoconvexity at $p$ implies the matrix \[\begin{pmatrix}
	\Hessian_r(L, L)& \Hessian_r(L, T_1)&\cdots&\Hessian_r (L, T_{n-2})\\
	\Hessian_r(T_1, L)& \Hessian_r(T_1, T_1)&\cdots&\Hessian_r (T_1, T_{n-2})\\
	\vdots&\vdots&\ddots&\vdots\\
\Hessian_r(T_{n-2}, L)& \Hessian_r(T_{n-2}, T_1)&\cdots&\Hessian_r (T_{n-2}, T_{n-2})\\
	\end{pmatrix}\] is semi-positive definite. This is equivalent to say its principal minors are nonnegative. Particularly, for $1\leq j\leq n-2$, we take out the principal minor
	\[\begin{pmatrix}
	\Hessian_r(L, L)&\Hessian_r(L, T_j)\\
		\Hessian_r(T_j, L)&\Hessian_r(T_j, T_j)\\
	\end{pmatrix}.\] The determinant should be nonnegative. That is at $p$, \[\Hessian_r(L, L)\Hessian_r(T_j, T_j)-|\Hessian_r(L, T_j)|^2\geq 0.\] But since $\Hessian_r(L, L)=0$ at $p$, we find out $\Hessian_r(L, T_j)=0$ at $p$.
\end{proof}

	\section{The hypersurface version of Diederich-Forn\ae ss index}\label{sec2}
	
	Let $r$ be an arbitrary defining function of $\Omega$. We know that $\rho=re^\psi$ for some smooth function $\psi$ defined in a neighborhood of $\overline{\Omega}$ in $\mathbb{C}^n$. 
	
	\begin{lemma}\label{calc}
		Let $\Omega$ be a bounded pseudoconvex domain with smooth boundary in $\mathbb{C}^n$. Let $L$ be a smooth $(1,0)$-tangent vector field in a tubular neighborhood of $\partial\Omega$. Let $N$ denote $N_r$.
		\begin{enumerate}
			\item  Then at $p\in \partial\Omega$,	\[
			-2\Re\left(\Hessian_{\rho} (L, N)\cdot\lim\limits_{z\to p}\frac{\overline{L}\rho}{-\rho}\cdot N\rho\right)=
		2e^{2\psi}\left(|L\psi|^2|\overline{N}r|^2+\Re\left((L\psi)(\overline{N}r)\Hessian_r(N, L)\right)\right).
			\]
		\item Assume $\Hessian_\rho(L, L)=0$ at $p\in \partial\Omega$. Then at $p$,	\[\begin{split}
		&\lim\limits_{z\to p}\frac{\Hessian_{\rho} (L, L)}{-\rho}\left|N\rho\right|^2\\=&-e^{2\psi}(Nr)\Bigg(|L\psi|^2(Nr)+(Nr)\Hessian_\psi(L, L)+ g(\nabla_L\nabla_N\nabla r, L)-2\|\nabla r\|^{-1}|\Hessian_r(N, L)|^2\\&+\|\nabla r\|L\left(\frac{1}{\|\nabla r\|}\right)\Hessian_r(N, L)\Bigg).
		\end{split}\]
	\end{enumerate}
	\end{lemma}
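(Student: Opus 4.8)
The plan is to prove both identities by direct computation. In each case we reduce the limit to boundary data using Lemma \ref{basic}, substitute $\rho=re^\psi$ and expand by the Leibniz rule, and then reorganize the outcome using the explicit form of $N=N_r$ together with the properties of $N_r$ from the Preliminaries (and, for the second identity, Lemma \ref{basic2}). Throughout we use that $L$ is tangent to the level sets of $r$ in the tubular neighborhood, so that $Lr$ and $\overline{L}r$ vanish identically there; that $r=0$ on $\partial\Omega$; and that $N_r r=\tfrac12\|\nabla r\|$.

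For part (1), first apply Lemma \ref{basic}(1) to replace $\lim_{z\to p}\tfrac{\overline{L}\rho}{-\rho}$ by $\tfrac{N(\overline{L}\rho)}{-N\rho}$. From $\rho=re^\psi$ and $\overline{L}r\equiv0$ we have $\overline{L}\rho=re^\psi\,\overline{L}\psi$, so at $p$ (where $r=0$) we get $N(\overline{L}\rho)=(Nr)e^\psi\,\overline{L}\psi$ and $N\rho=(Nr)e^\psi$, whence $\lim_{z\to p}\tfrac{\overline{L}\rho}{-\rho}=-\,\overline{L}\psi$. Next expand $\Hessian_\rho(L,N)=g(\nabla_L\nabla(re^\psi),N)$ by the Leibniz rule; at $p$ every term carrying a factor $r$ or $Lr$ drops, leaving $\Hessian_\rho(L,N)=e^\psi(L\psi)(Nr)+e^\psi\Hessian_r(L,N)$ after using $g(\nabla r,N)=Nr$. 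Substituting these three quantities into the left-hand side, taking the real part, and simplifying with $Nr=\overline{N}r=\tfrac12\|\nabla r\|$ (so $|\overline{N}r|^2=(Nr)^2$) and the Hermitian symmetry $\Hessian_r(L,N)=\overline{\Hessian_r(N,L)}$, gives the claimed right-hand side.

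For part (2), since $\Hessian_\rho(L,L)=0$ at $p$, Lemma \ref{basic}(2) applies, and at $p$ — where $N\rho=(Nr)e^\psi$ is real — the left-hand side equals $\tfrac{N(\Hessian_\rho(L,L))}{-N\rho}\,|N\rho|^2=-(N\rho)\,N(\Hessian_\rho(L,L))$. Expanding $\Hessian_\rho(L,L)$ via $\rho=re^\psi$ and $Lr\equiv\overline{L}r\equiv0$ gives
\[
\Hessian_\rho(L,L)=e^\psi\Hessian_r(L,L)+re^\psi\big(|L\psi|^2+\Hessian_\psi(L,L)\big),
\]
so in particular the hypothesis forces $\Hessian_r(L,L)=0$ at $p$. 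Applying $N$ and evaluating at $p$ — where the $(Ne^\psi)\Hessian_r(L,L)$ term vanishes since $\Hessian_r(L,L)=0$ there, and the $r(\cdots)$ terms survive only through $Nr$ — gives $N(\Hessian_\rho(L,L))=e^\psi\,N(\Hessian_r(L,L))+(Nr)e^\psi\big(|L\psi|^2+\Hessian_\psi(L,L)\big)$. It remains to identify $N(\Hessian_r(L,L))$ at $p$. Differentiating $\Hessian_r(L,L)=g(\nabla_L\nabla r,L)$ along $N$, using that $g$ is parallel, and commuting $\nabla_N$ past $\nabla_L$ — the curvature term drops because $\mathbb{C}^n$ is flat — isolates the third-order term $g(\nabla_L\nabla_N\nabla r,L)$, together with terms in which $N$ differentiates the outer copy of $L$ or in which the commutator $[N,L]$ enters. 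Each such lower-order term has the form $\Hessian_r(X,L)$ or $\Hessian_r(L,X)$ for a $(1,0)$-vector $X$ built from $\nabla L$, $\nabla N$, and $[N,L]$; decomposing $X$ in a frame $\{L,T_1,\dots,T_{n-2},N\}$ adapted to $\partial\Omega$ (with $L,T_1,\dots,T_{n-2}$ spanning the complex tangent space at $p$ and $N$ the complex normal), the $L$-component contributes $0$ because $\Hessian_r(L,L)=0$ at $p$, and the $T_j$-components contribute $0$ by Lemma \ref{basic2} (equivalently $\Hessian_r(L,T_j)=0$ at $p$, the two Hessians agreeing along tangential directions on $\partial\Omega$); so only the complex-normal component survives and $X$ contributes a multiple of $\Hessian_r(N,L)$ or its conjugate. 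The coefficients are read off from $N^j=2\overline{r_j}/\|\nabla r\|$: the derivative $L(1/\|\nabla r\|)$ enters from the Leibniz rule applied to this normalization, and the remaining quadratic expression collapses to $|\Hessian_r(N,L)|^2$ once one uses $\sum_j|r_j|^2=\tfrac14\|\nabla r\|^2$ (equivalently, that $\sqrt2\,N_\delta$ is a unit vector). Assembling these, multiplying by $-(N\rho)=-(Nr)e^\psi$, and using $Nr=\tfrac12\|\nabla r\|$, gives the stated identity.

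The routine part is (1). The main obstacle is the bookkeeping in (2): one must account for every commutator and connection term generated by moving $N$ inside the Hessian, keep all signs straight, and check that the tangential Hessian contributions genuinely cancel through Lemma \ref{basic2}, so that what survives is \emph{exactly} the third-order term $g(\nabla_L\nabla_N\nabla r,L)$, the single quadratic term $-2\|\nabla r\|^{-1}|\Hessian_r(N,L)|^2$, and the single first-order term $\|\nabla r\|\,L(1/\|\nabla r\|)\,\Hessian_r(N,L)$, with no leftover pieces.
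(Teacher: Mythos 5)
Your proposal is correct and follows essentially the same route as the paper: reduce the limits to normal derivatives via Lemma \ref{basic}, expand $\rho=re^\psi$, commute $\nabla_N$ past $\nabla_L$ using the flatness of $\mathbb{C}^n$, and kill the tangential components of the resulting lower-order terms with $\Hessian_r(L,L)=0$ at $p$ and Lemma \ref{basic2}, so that only $g(\nabla_L\nabla_N\nabla r,L)$, $-2\|\nabla r\|^{-1}|\Hessian_r(N,L)|^2$ and $\|\nabla r\|L(1/\|\nabla r\|)\Hessian_r(N,L)$ survive --- which is exactly the term-by-term bookkeeping the paper carries out, so the collapse you assert does close with no leftover pieces. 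Your only real deviation is organizational but clean: by first isolating the $\psi$-dependence through the identity $\Hessian_\rho(L,L)=e^\psi\Hessian_r(L,L)+re^\psi\big(|L\psi|^2+\Hessian_\psi(L,L)\big)$ (valid since $Lr\equiv 0$), you run the commutator computation for $r$ alone, thereby avoiding the cross-term cancellations (the $(LNr)(\overline{L}\psi)$ and $(L\psi)\Hessian_r(N,L)$ pieces) that the paper has to track explicitly across its four terms.
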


\begin{remark}
	The reader should be warned that $L$ might be vanishing at some points.
\end{remark}
	
	\begin{proof}
		First, without loss of generality, we assume that $L$ is defined in a tubular neighborhood of $\partial\Omega$ so that $Lr=0$.
We use the identity from Lemma \ref{basic}. At $p\in \partial\Omega$,
			\[\begin{split}
		&-2\Re\left(\Hessian_{\rho} (L, N)\cdot\frac{\overline{L}\rho}{-\rho}\cdot N\rho\right)\\=
		&2\Re\left(\Hessian_\rho(L, N)\cdot N(\overline{L}\rho)\right)\\=&2\Re\left(\Hessian_\rho(L, N)\cdot (\Hessian_\rho(N, L)+(\nabla_N\overline{L})\rho)\right)\\=&2|\Hessian_\rho(L, N)|^2+2\Re\left(\Hessian_\rho(L, N)\cdot (\nabla_N\overline{L})\rho\right)\\=&2|\Hessian_\rho(L, N)|^2-2e^{2\psi}\Re\left((L\psi)(\overline{N}r)\Hessian_r(N, L)\right)-2e^{2\psi}|\Hessian_r(L, N)|^2\\=&2e^{2\psi}\left(|(L\psi)(\overline{N}r)+\Hessian_r(L, N)|^2-\Re\left((L\psi)(\overline{N}r)\Hessian_r(N, L)\right)-|\Hessian_r(L, N)|^2\right)\\=&2e^{2\psi}\left(|L\psi|^2|\overline{N}r|^2+\Re\left((L\psi)(\overline{N}r)\Hessian_r(N, L)\right)\right).
		\end{split}\]
		
		Now with the assumption $\Hessian_\rho(L, L)=0$ at $p\in\partial\Omega$, we compute \[
		\frac{\Hessian_{\rho} (L, L)}{-\rho}\left|N\rho\right|^2.\]Again, we use the identities from Lemma \ref{basic} to obtain that
		\[
		\lim\limits_{z\to p}\frac{\Hessian_{\rho} (L, L)}{-\rho}\left|N\rho\right|^2=\frac{N\Hessian_{\rho} (L, L)}{-N\rho}\left|N\rho\right|^2.\]
		
		Here \[N\rho=N(r e^\psi)=r e^\psi N\psi+e^\psi Nr=e^\psi(Nr)\] at $\partial\Omega$, hence \[\lim\limits_{z\to p}\frac{\Hessian_{\rho} (L, L)}{-\rho}\left|N\rho\right|^2=-e^\psi (Nr)N\Hessian_{\rho} (L, L).\]
		
		We are going to compute \[N\Hessian_\rho(L, L).\] Since we only consider the Euclidean space $\mathbb{C}^n$ where the curvature tensor is 0, we can obtain that	\[\begin{split}
		&N\Hessian_\rho(L, L)\\=&g(\nabla_N\nabla_L\nabla\rho, L)+g(\nabla_L\nabla\rho, \nabla_{\overline{N}} L)\\=&g(\nabla_L\nabla_N\nabla\rho, L)+g(\nabla_{[N, L]}\nabla\rho, L)+g(\nabla_L\nabla\rho, \nabla_{\overline{N}} L)\\=&Lg(\nabla_N\nabla\rho, L)-g(\nabla_N\nabla\rho, \nabla_{\overline{L}}L)+g(\nabla_{[N, L]}\nabla\rho, L)+g(\nabla_L\nabla\rho, \nabla_{\overline{N}} L)
		\end{split}\]
		
		For the following paragraphs, we are going to compute the four terms generated from the last line one by one.
		
		First, we compute the term $Lg(\nabla_N\nabla\rho, L)$ at $p$.
		\[\begin{split}
	&Lg(\nabla_N\nabla\rho, L)\\=&L\left(g(\nabla_N(r e^\psi\nabla\psi), L)+g(\nabla_N(e^\psi\nabla r), L)\right)\\=&L\left(e^\psi(N r)(\overline{L}\psi)+r e^\psi(N\psi)(\overline{L}\psi)+r e^\psi g(\nabla_N\nabla\psi, L)+e^\psi (N\psi)(\overline{L}r)+e^\psi g(\nabla_N\nabla r, L)\right)\\=&e^\psi (N r)|L\psi|^2+e^\psi(LN r)(\overline{L}\psi)+e^\psi(N r)(L\overline{L}\psi)+e^\psi(L\psi)g(\nabla_N\nabla r, L)+e^\psi Lg(\nabla_N\nabla r, L).
	\end{split}\]
	
	Second, we compute $-g(\nabla_N\nabla\rho, \nabla_{\overline{L}}L)$.
	\[\begin{split}
	&-g(\nabla_N\nabla\rho, \nabla_{\overline{L}}L)\\=&-g(\nabla_N(e^\psi\nabla r), \nabla_{\overline{L}}L)-g(\nabla_N(r e^\psi\nabla\psi),\nabla_{\overline{L}} L)\\=&-e^\psi g(\nabla_N\nabla r, \nabla_{\overline{L}}L)-e^\psi(Nr)(\nabla_L {\overline{L}})\psi.
	\end{split}\] Here we used the fact that $(\nabla_{\overline{L}}L)r=0$. This is because that \[0=\Hessian_r(L, L)=\overline{L}(Lr)-(\nabla_{\overline{L}}L)r=-(\nabla_{\overline{L}}L)r.\]
		
		Third, we compute $g(\nabla_L\nabla\rho, \nabla_{\overline{N}} L)$.
		\[\begin{split}
		&g(\nabla_L\nabla\rho, \nabla_{\overline{N}} L)\\=&g(\nabla_L(e^\psi\nabla r), \nabla_{\overline{N}} L)+g(\nabla_L(r e^\psi\nabla\psi), \nabla_{\overline{N}} L)\\=&e^\psi(L\psi)(\nabla_N\overline{L})r+e^\psi g(\nabla_L\nabla r, \nabla_{\overline{N}}L)\\=&-e^\psi(L\psi)\Hessian_r(N, L)+e^\psi g(\nabla_L\nabla r, \sqrt{2}N)g(\sqrt{2}N, \nabla_{\overline{N}}L)\\=&-e^\psi(L\psi)\Hessian_r(N, L)+2e^\psi g(\nabla_L\nabla r, N)g(N+\overline{N}, \nabla_{\overline{N}}L)\\=&-e^\psi(L\psi)\Hessian_r(N, L)+2\|\nabla r\|^{-1}e^\psi g(\nabla_L\nabla r, N)g(\nabla r, \nabla_{\overline{N}}L)\\=&-e^\psi(L\psi)\Hessian_r(N, L)-2\|\nabla r\|^{-1}e^\psi g(\nabla_L\nabla r, N)g(\nabla_N\nabla r, L)\\=&-e^\psi(L\psi)\Hessian_r(N, L)-2e^\psi \|\nabla r\|^{-1}|\Hessian_r(N, L)|^2.
		\end{split}\] Here we used the fact from Lemma \ref{basic2} that $\Hessian_r(L, T_j)=0$ once $L, T_1, \cdots, T_{n-2}$ are orthogonal.
		
		Finally, for the same reason, we compute $g(\nabla_{[N, L]}\nabla\rho, L)$.	
		\[\begin{split}
		&g(\nabla_{[N, L]}\nabla\rho, L)\\=&g(\nabla_{[N, L]}(r e^\psi\nabla\psi), L)+g(\nabla_{[N, L]}(e^\psi\nabla r), L)\\=&e^\psi([N, L]r)(\overline{L}\psi)+e^\psi g(\nabla_{[N, L]}\nabla r, L)\\=&-e^\psi(LNr)(\overline{L}\psi)+\sqrt{2}e^\psi g([N, L], \sqrt{2}N)g(\nabla_N\nabla r, L)\\=&-e^\psi(LNr)(\overline{L}\psi)+e^\psi\|\nabla r\|L\left(\frac{1}{\|\nabla r\|}\right)\Hessian_r(N, L).
		\end{split}\]
		
		The last equation is because  \[g([N, L], N)=g([N, L], N+\overline{N})=g([N, L], \nabla\delta)= [N, L]\delta=NL\delta=\frac{\|\nabla r\|}{2}L\left(\frac{1}{\|\nabla r\|}\right).\] 
		
		Hence, 	\[\begin{split}
		&\lim\limits_{z\to p}\frac{\Hessian_{\rho} (L, L)}{-\rho}\left|N\rho\right|^2\\=&-e^{2\psi}(Nr)\Bigg(|L\psi|^2(Nr)+(Nr)\Hessian_\psi(L, L)+ g(\nabla_L\nabla_N\nabla r, L)-2\|\nabla r\|^{-1}|\Hessian_r(N, L)|^2\\&+\|\nabla r\|L\left(\frac{1}{\|\nabla r\|}\right)\Hessian_r(N, L)\Bigg).
		\end{split}\]
	\end{proof}
	
	By the preceding lemma, assume that $\Hessian_\rho(L, L)=0$ at $p\in \partial\Omega$. At $p$, we obtain	
	\[\begin{split}
		&-2\Re\left(\Hessian_{\rho} (L, N)\cdot\lim\limits_{z\to p}\frac{\overline{L}\rho}{-\rho}\cdot N\rho\right)+\lim\limits_{z\to p}\frac{\Hessian_{\rho} (L, L)}{-\rho}\left|N\rho\right|^2\\=&e^{2\psi}\Bigg(|(\overline{L}\psi)(Nr)+\Hessian_r(N, L)|^2-\frac{\|\nabla r\|}{2}\Big(\frac{\|\nabla r\|}{2}\Hessian_\psi(L, L)+g(\nabla_L\nabla_N\nabla r, L)\\&+\|\nabla r\|L\left(\frac{1}{\|\nabla r\|}\right)\Hessian_r(N, L)\Big)\Bigg).
	\end{split}
	\]
	
	We can see that the last term $\|\nabla r\|L\left(\frac{1}{\|\nabla r\|}\right)\Hessian_r(N, L)$ can be dropped if the $\|\nabla r\|$ is constant. So in practice, we choose the $r$ with constant gradient norm in order to make the calculation simple.

	From now on, let $U$ be a tubular neighborhood of $\partial\Omega$. 
	\begin{lemma}\label{2.2}
			Let $\Omega$ be a bounded pseudoconvex domain with smooth boundary in $\mathbb{C}^n$. Let $L$ be a smooth $(1,0)$-tangent vector field in $U\cap\overline{\Omega}$ so that $Lr=0$. We assume $\Hessian_\rho(L, L)=0$ at $p\in \partial\Omega$. We also denote $N_r$ by $N$.  Consider the following expression:
			\begin{equation}\label{star}
		\begin{split}
					&\Hessian_{-(-\rho)^\eta }(aL+bN,aL+bN)\\=&|a|^2\Hessian_{-(-\rho)^\eta}(L, L)+|b|^2\Hessian_{-(-\rho)^\eta }(N, N)+2\Re (a\bar{b}\Hessian_{-(-\rho)^\eta }(L, N))\\=&\eta (-\rho)^{\eta-1}\Bigg(|a|^2\Big(\Hessian_\rho (L, L)+\frac{1-\eta}{-\rho}|L\rho|^2\Big)+2\Re \Big(a\bar{b}\Big(\Hessian_\rho (L, N)+\frac{1-\eta}{-\rho}L\rho\cdot\overline{N}\rho\Big)\Big)\\&+|b|^2 \Big(\Hessian_\rho (N, N)+\frac{1-\eta}{-\rho}N\rho\overline{N} \rho\Big)\Bigg).
				\end{split}
			\end{equation}
		If (\ref{star}) is positive for all $(a,b)\in\mathbb{C}^2\backslash(0,0)$ and all $z\in U\cap\Omega$, then
			\[\begin{split}
			&\left(\frac{1}{1-\eta}-1\right)\left|(\overline{L}\psi)(Nr)+\Hessian_r(N, L)\right|^2+\frac{\|\nabla r\|}{2}\Bigg(\frac{\|\nabla r\|}{2}\Hessian_\psi(L, L)+g(\nabla_L\nabla_N\nabla r, L)\\&+\|\nabla r\|L\left(\frac{1}{\|\nabla r\|}\right)\Hessian_r(N, L)\Bigg)\leq 0.\end{split}
			\] at $p$. 
	\end{lemma}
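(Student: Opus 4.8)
The plan is to reinterpret the hypothesis in (\ref{star}) as positive-definiteness of a $2\times 2$ Hermitian matrix, take the boundary limit of the associated determinant inequality along the normal to $p$, and then recognize the limiting inequality as an algebraic rearrangement of the identity displayed immediately after the proof of Lemma \ref{calc}.

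First I would observe that, since $0<\eta<1$ makes $\eta(-\rho)^{\eta-1}>0$ on $\Omega$, formula (\ref{star}) shows that positivity of $\Hessian_{-(-\rho)^\eta}(aL+bN,aL+bN)$ for all $(a,b)\neq(0,0)$ at a point of $U\cap\Omega$ is equivalent there to positive-definiteness of the Hermitian matrix with diagonal entries $A=\Hessian_\rho(L,L)+\tfrac{1-\eta}{-\rho}|L\rho|^2$ and $C=\Hessian_\rho(N,N)+\tfrac{1-\eta}{-\rho}|N\rho|^2$ and off-diagonal entry $B=\Hessian_\rho(L,N)+\tfrac{1-\eta}{-\rho}(L\rho)(\overline{N}\rho)$. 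Hence $C>0$ and $AC-|B|^2>0$ throughout $U\cap\Omega$.

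Next I would push this to $p$. Using $Lr=0$ one has $L\rho=\rho\,L\psi$, so $A/(-\rho)=\Hessian_\rho(L,L)/(-\rho)+(1-\eta)|L\psi|^2$, which converges as $z\to p$ by Lemma \ref{basic}(2) together with the standing assumption $\Hessian_\rho(L,L)=0$ at $p$; similarly $(-\rho)C=(-\rho)\Hessian_\rho(N,N)+(1-\eta)|N\rho|^2$ converges to $(1-\eta)|N\rho|^2|_p>0$, and $B=\Hessian_\rho(L,N)-(1-\eta)(L\psi)(\overline{N}\rho)$ converges to some $B_\ast$. Rewriting $AC-|B|^2>0$ as $\bigl(A/(-\rho)\bigr)\bigl((-\rho)C\bigr)>|B|^2$ and letting $z\to p$ gives $\bigl(\lim A/(-\rho)\bigr)\bigl(\lim(-\rho)C\bigr)\ge|B_\ast|^2$ at $p$. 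I would then evaluate all three limits explicitly from the boundary identities $N\rho|_p=e^\psi(Nr)$, $\overline{N}r=Nr=\tfrac12\|\nabla r\|$ and $\nabla\rho=e^\psi\nabla r+\rho\nabla\psi$: one gets $\lim(-\rho)C=(1-\eta)e^{2\psi}(Nr)^2$ and $B_\ast=e^\psi\bigl(\Hessian_r(L,N)+\eta(L\psi)(Nr)\bigr)$ once the term $(1-\eta)e^\psi(L\psi)(\overline{N}r)$ cancels, while multiplying $\lim A/(-\rho)$ by $|N\rho|^2|_p$ rewrites it as $\lim\bigl(\Hessian_\rho(L,L)/(-\rho)\bigr)|N\rho|^2+(1-\eta)e^{2\psi}|(L\psi)(Nr)|^2$, whose first summand is precisely what Lemma \ref{calc}(2) computes.

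Finally I would substitute these into $A_\ast C_\ast\ge|B_\ast|^2$, expand $|B_\ast|^2$ after writing $\Hessian_r(L,N)+\eta(L\psi)(Nr)=\bigl[(L\psi)(Nr)+\Hessian_r(L,N)\bigr]-(1-\eta)(L\psi)(Nr)$, cancel the common term $(1-\eta)^2e^{2\psi}|(L\psi)(Nr)|^2$, and divide by $1-\eta>0$; what remains is
\[
\lim_{z\to p}\tfrac{\Hessian_\rho(L,L)}{-\rho}|N\rho|^2\ +\ 2e^{2\psi}\bigl(|L\psi|^2|\overline{N}r|^2+\Re((L\psi)(\overline{N}r)\Hessian_r(N,L))\bigr)\ \ge\ \tfrac{e^{2\psi}}{1-\eta}\,\bigl|(\overline{L}\psi)(Nr)+\Hessian_r(N,L)\bigr|^2 .
\]
By Lemma \ref{calc}(1) the bracketed expression on the left equals $-2\Re\bigl(\Hessian_\rho(L,N)\cdot\lim_{z\to p}\tfrac{\overline{L}\rho}{-\rho}\cdot N\rho\bigr)$, so the entire left-hand side is exactly the quantity whose value is given by the identity right after the proof of Lemma \ref{calc}; substituting that value, dividing by $e^{2\psi}$ and rearranging produces precisely the claimed inequality. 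I expect the main difficulty to be purely organizational: keeping the several boundary identities consistent so that the two terms quadratic in $(L\psi)(Nr)$ cancel cleanly and the residual lines up with the earlier identity. One point worth flagging is that (\ref{star}) is assumed only on $U\cap\Omega$, so the matrix inequality is strict in the interior while only its non-strict version survives the limit, which is precisely what is asserted.
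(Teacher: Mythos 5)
Your proposal is correct and follows essentially the same route as the paper: both reduce the hypothesis to the determinant inequality $AC-|B|^2>0$ on $U\cap\Omega$ (the paper gets it as a discriminant condition for a quadratic in $\xi=|b/a|$, you via Sylvester's criterion for the $2\times 2$ Hermitian matrix), then pass to the limit at $p$ along the normal using Lemma \ref{basic}, and conclude with Lemma \ref{calc} and the identity displayed after it. The only differences are cosmetic bookkeeping in how the limiting inequality is rearranged.
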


\begin{proof}
	First, let us prove that \[\begin{split}
	&I:=|a|^2\Big(\Hessian_\rho (L, L)+\frac{1-\eta}{-\rho}|L\rho|^2\Big)+2\Re \Big(a\bar{b}\Big(\Hessian_\rho (L, N)+\frac{1-\eta}{-\rho}L\rho\cdot\overline{N}\rho\Big)\Big)\\&+|b|^2 \Big(\Hessian_\rho (N, N)+\frac{1-\eta}{-\rho}N\rho\overline{N} \rho\Big)
	\end{split}\] is positive for all $(a,b)\in\mathbb{C}^2\backslash(0,0)$ and all $z\in U\cap\Omega$ if and only if \[\begin{split}
	&I\!I:=|a|^2\Big(\Hessian_\rho (L, L)+\frac{1-\eta}{-\rho}|L\rho|^2\Big)-2\Big|a\bar{b}\Big(\Hessian_\rho (L, N)+\frac{1-\eta}{-\rho}L\rho\cdot\overline{N}\rho\Big)\Big|\\&+|b|^2 \Big(\Hessian_\rho (N, N)+\frac{1-\eta}{-\rho}N\rho\overline{N} \rho\Big)
	\end{split}\] is positive for all $(a,b)\in\mathbb{C}^2\backslash(0,0)$ and all $z\in U\cap\Omega$. Since \[I\!I\leq I\]  we only need to show that $I$ being positive for all $(a,b)\in\mathbb{C}^2\backslash(0,0)$ and all $z\in U\cap\Omega$ implies $I\!I$ is positive for all $(a,b)\in\mathbb{C}^2\backslash(0,0)$ and all $z\in U\cap\Omega$. Suppose $I$ is positive for all $(a,b)\in\mathbb{C}^2\backslash(0,0)$. In particular, this includes the case of $a, b$ satisfying the following identity at each point $z\in U\cap\Omega$, \[\Re \Big(a\bar{b}\Big(\Hessian_\rho (L, N)+\frac{1-\eta}{-\rho}L\rho\cdot\overline{N}\rho\Big)\Big)=-\Big|a\bar{b}\Big(\Hessian_\rho (L, N)+\frac{1-\eta}{-\rho}L\rho\cdot\overline{N}\rho\Big)\Big|.\] Hence $I$ being positive for all $(a,b)\in\mathbb{C}^2\backslash(0,0)$ and all $z\in U\cap\Omega$ implies $I\!I$ is positive for all $(a,b)\in\mathbb{C}^2\backslash(0,0)$ and all $z\in U\cap\Omega$. This gives that 	\[\begin{split}
	&\Hessian_{-(-\rho)^\eta }(aL+bN,aL+bN)\\=&|a|^2\Hessian_{-(-\rho)^\eta}(L, L)+|b|^2\Hessian_{-(-\rho)^\eta }(N, N)+2\Re (a\bar{b}\Hessian_{-(-\rho)^\eta }(L, N))\\=&\eta (-\rho)^{\eta-1}\Bigg(|a|^2\Big(\Hessian_\rho (L, L)+\frac{1-\eta}{-\rho}|L\rho|^2\Big)+2\Re \Big(a\bar{b}\Big(\Hessian_\rho (L, N)+\frac{1-\eta}{-\rho}L\rho\cdot\overline{N}\rho\Big)\Big)\\&+|b|^2 \Big(\Hessian_\rho (N, N)+\frac{1-\eta}{-\rho}N(\rho)\overline{N} (\rho)\Big)\Bigg)
	\end{split}\]
	is positive for all $(a,b)\in\mathbb{C}^2\backslash(0,0)$ and all $z\in U\cap\Omega$ if and only if \[\begin{split}
	&I\!I:=|a|^2\Big(\Hessian_\rho (L, L)+\frac{1-\eta}{-\rho}|L\rho|^2\Big)-2\Big|a\bar{b}\Big(\Hessian_\rho (L, N)+\frac{1-\eta}{-\rho}L\rho\cdot\overline{N}\rho\Big)\Big|\\&+|b|^2 \Big(\Hessian_\rho (N, N)+\frac{1-\eta}{-\rho}N(\rho)\overline{N} (\rho)\Big)
	\end{split}\] is positive for all $(a,b)\in\mathbb{C}^2\backslash(0,0)$ and all $z\in U\cap\Omega$, because $\eta(-\rho)^{\eta-1}$ is always positive. From now on, we focus on \[\begin{split}
	&|a|^2\Big(\Hessian_\rho (L, L)+\frac{1-\eta}{-\rho}|L\rho|^2\Big)-2\Big|a\bar{b}\Big(\Hessian_\rho (L, N)+\frac{1-\eta}{-\rho}L\rho\cdot\overline{N}\rho\Big)\Big|\\&+|b|^2 \Big(\Hessian_\rho (N, N)+\frac{1-\eta}{-\rho}N(\rho)\overline{N} (\rho)\Big)>0.
	\end{split}\] Without loss of generality, we assume that $\Hessian_\rho (N, N)+\frac{1-\eta}{-\rho}N(\rho)\overline{N} (\rho)>0$ in $U$. This is because that this term will blow up when $z$ approaches $\partial\Omega$. If $a=0$, the inequality above holds trivially. When $a\neq 0$, we derive a new inequality by dividing both sides by $|a|^2$: \[\begin{split}
	&\Big(\Hessian_\rho (L, L)+\frac{1-\eta}{-\rho}|L\rho|^2\Big)-2\left|\frac{b}{a}\right|\Big|\Big(\Hessian_\rho (L, N)+\frac{1-\eta}{-\rho}L\rho\cdot\overline{N}\rho\Big)\Big|\\&+\left|\frac{b}{a}\right|^2 \Big(\Hessian_\rho (N, N)+\frac{1-\eta}{-\rho}N(\rho)\overline{N} (\rho)\Big)>0.
	\end{split}\] Since $a,b$ are arbitrary, $\left|\frac{b}{a}\right|$ can be any positive number. Let $\xi=|\frac{b}{a}|\geq 0$ and we rewrite $I\!I$ as a quadratic inequality with variable $\xi$: \begin{equation}\label{quad}
		\begin{split}
		&\Big(\Hessian_\rho (L, L)+\frac{1-\eta}{-\rho}|L\rho|^2\Big)-2\xi\Big|\Big(\Hessian_\rho (L, N)+\frac{1-\eta}{-\rho}L\rho\cdot\overline{N}\rho\Big)\Big|\\&+\xi^2 \Big(\Hessian_\rho (N, N)+\frac{1-\eta}{-\rho}N(\rho)\overline{N} (\rho)\Big)>0.
		\end{split}
	\end{equation} We observe that the axis of symmetry is \[\frac{\Big|\Big(\Hessian_\rho (L, N)+\frac{1-\eta}{-\rho}L\rho\cdot\overline{N}\rho\Big)\Big|}{\Big(\Hessian_\rho (N, N)+\frac{1-\eta}{-\rho}N(\rho)\overline{N} (\rho)\Big)}\geq 0.\] Hence, if the quadratic function \[\begin{split}
	&\Big(\Hessian_\rho (L, L)+\frac{1-\eta}{-\rho}|L\rho|^2\Big)-2\xi\Big|\Big(\Hessian_\rho (L, N)+\frac{1-\eta}{-\rho}L\rho\cdot\overline{N}\rho\Big)\Big|\\&+\xi^2 \Big(\Hessian_\rho (N, N)+\frac{1-\eta}{-\rho}N(\rho)\overline{N} (\rho)\Big)
	\end{split}\] has roots, there is one nonnegative root which contradicts (\ref{quad}). Thus, the preceding quadratic function should not have any roots. That means,
	\[\begin{split}
	\Delta=&\Big|\Hessian_\rho (L, N)+\frac{1-\eta}{-\rho}L\rho\cdot\overline{N}\rho\Big|^2\\&-\Big(\Hessian_\rho (L, L)+\frac{1-\eta}{-\rho}|L\rho|^2\Big)\Big(\Hessian_\rho (N, N)+\frac{1-\eta}{-\rho}N(\rho)\overline{N} (\rho)\Big)<0.
	\end{split}\]
	Simplifying $\Delta$, we get \[\begin{split}
	0>\Delta=&|\Hessian_\rho(L, N)|^2+2\frac{1-\eta}{-\rho}\Re\left(\Hessian_\rho(L, N)\overline{L}\rho N\rho\right)-\Hessian_\rho(L, L)\Hessian_\rho(N, N)\\&-\frac{1-\eta}{-\rho}|L\rho|^2\Hessian_\rho(N, N)-(1-\eta)\frac{\Hessian_\rho(L, L)}{-\rho}|N\rho|^2.
	\end{split}\]
	Observe that $\lim\limits_{z\to p}\Hessian_\rho(L, L)=0$ and $\lim\limits_{z\to p}\frac{|L\rho|^2}{\rho}=0$. Letting $z\rightarrow p$, we get the inequality
	\[|\Hessian_\rho(L, N)|^2+2\frac{1-\eta}{-\rho}\Re\left(\Hessian_\rho(L, N)\overline{L}\rho N\rho\right)-(1-\eta)\frac{\Hessian_\rho(L, L)}{-\rho}|N\rho|^2\leq 0.\] The proof is completed by Lemma \ref{calc}.
\end{proof}

For the following lemmas, we have to add a technical assumption. We consider a coordinate chart $U_\alpha$ of $U$. We assume that $L$ is a non-vanishing smooth $(1,0)$-tangent vector field in $U_\alpha$ so that $Lr=0$, where $r$ is a defining function of $\Omega$. This assumption needs localization because there might not be a non-vanishing smooth $(1,0)$-tangent vector field can be found in $U$. The reader is referred to the hairy ball theorem.
\begin{lemma}
		Let $\Omega$ be a bounded pseudoconvex domain with smooth boundary in $\mathbb{C}^n$. Let $U_\alpha$, $L$ and $r$ as above. We denote $N_r$ by $N$. Let \[\Sigma^\alpha_L:=\lbrace p\in U_\alpha\cap\partial\Omega: \Hessian_\rho(L, L)=0 \text{  at  } p\rbrace.\]  If there exists $\epsilon>0$ so that \[\begin{split}
		&\left(\frac{1}{1-\eta}-1\right)\left|(\overline{L}\psi)(Nr)+\Hessian_r(N, L)\right|^2+\frac{\|\nabla r\|}{2}\Bigg(\frac{\|\nabla r\|}{2}\Hessian_\psi(L, L)+g(\nabla_L\nabla_N\nabla r, L)\\&+\|\nabla r\|L\left(\frac{1}{\|\nabla r\|}\right)\Hessian_r(N, L)\Bigg)\leq -\epsilon	 \end{split}
		\]  for all $p\in\Sigma^\alpha_L$, then there exists a neighborhood $V^\alpha_L$ of $\Sigma^\alpha_L$ in $U_\alpha$ so that \[\begin{split}
	&\Hessian_{-(-\rho)^\eta }(aL+bN,aL+bN)\\=&|a|^2\Hessian_{-(-\rho)^\eta}(L, L)+|b|^2\Hessian_{-(-\rho)^\eta }(N, N)+2\Re (a\bar{b}\Hessian_{-(-\rho)^\eta }(L, N))\\=&\eta (-\rho)^{\eta-1}\Bigg(|a|^2\Big(\Hessian_\rho (L, L)+\frac{1-\eta}{-\rho}|L\rho|^2\Big)+2\Re \Big(a\bar{b}\Big(\Hessian_\rho (L, N)+\frac{1-\eta}{-\rho}L\rho\cdot\overline{N}\rho\Big)\Big)\\&+|b|^2 \Big(\Hessian_\rho (N, N)+\frac{1-\eta}{-\rho}N\rho\overline{N} \rho\Big)\Bigg)
	\end{split}\] is positive for all $(a,b)\in\mathbb{C}^2\backslash(0,0)$ and all $z\in\Omega\cap V^\alpha_L$.
\end{lemma}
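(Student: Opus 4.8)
The plan is to argue by contradiction using a compactness/continuity argument, running the implication of Lemma \ref{2.2} in reverse near the degeneracy set $\Sigma^\alpha_L$. First I would recall, from the computation at the end of Lemma \ref{calc} together with the algebra in the proof of Lemma \ref{2.2}, that the positivity of $\Hessian_{-(-\rho)^\eta}(aL+bN, aL+bN)$ for all $(a,b)\neq(0,0)$ at a point $z\in\Omega\cap U_\alpha$ is (after dividing by the positive factor $\eta(-\rho)^{\eta-1}$ and passing to the quantity $I\!I$) equivalent to the strict negativity of the discriminant
\[
\Delta(z):=\Big|\Hessian_\rho(L,N)+\tfrac{1-\eta}{-\rho}L\rho\cdot\overline{N}\rho\Big|^2-\Big(\Hessian_\rho(L,L)+\tfrac{1-\eta}{-\rho}|L\rho|^2\Big)\Big(\Hessian_\rho(N,N)+\tfrac{1-\eta}{-\rho}N\rho\,\overline{N}\rho\Big),
\]
\emph{provided} the coefficient $\Hessian_\rho(N,N)+\tfrac{1-\eta}{-\rho}N\rho\,\overline{N}\rho$ is positive there — and the latter holds automatically in a one-sided neighborhood of $\partial\Omega$ since this term blows up like $(1-\eta)|N\rho|^2/(-\rho)$ as $z\to\partial\Omega$. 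So after shrinking $U_\alpha$ we may assume this leading coefficient is positive throughout $\Omega\cap U_\alpha$, and the whole statement reduces to proving $\Delta(z)<0$ on $\Omega\cap V^\alpha_L$ for a suitable neighborhood $V^\alpha_L$ of $\Sigma^\alpha_L$.

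Next I would identify the boundary limit of a rescaled $\Delta$. Multiplying the relevant pieces of $\Delta$ by $(-\rho)$ and using Lemma \ref{basic} (parts 1 and 2, valid at points of $\Sigma^\alpha_L$ since there $\Hessian_\rho(L,L)=0$) together with Lemma \ref{calc}, one checks that for $p\in\Sigma^\alpha_L$,
\[
\lim_{z\to p}(-\rho)\,\Delta(z)= -(1-\eta)|N\rho|^2\cdot\Big[\big(\tfrac{1}{1-\eta}-1\big)\big|(\overline{L}\psi)(Nr)+\Hessian_r(N,L)\big|^2+\tfrac{\|\nabla r\|}{2}\big(\tfrac{\|\nabla r\|}{2}\Hessian_\psi(L,L)+g(\nabla_L\nabla_N\nabla r,L)+\|\nabla r\|L(\tfrac{1}{\|\nabla r\|})\Hessian_r(N,L)\big)\Big];
\]
this is the same bracketed quantity that appears in the hypothesis, so the limit equals a positive multiple of that bracket times $-(1-\eta)|N\rho|^2<0$, hence by hypothesis it is $\geq \epsilon\,(1-\eta)|N\rho|^2>0$ — wait, more precisely, the bracket being $\leq -\epsilon$ forces $\lim_{z\to p}(-\rho)\Delta(z)<0$, bounded away from $0$ uniformly in $p\in\Sigma^\alpha_L$ (using that $|N\rho|=|Nr|$ is bounded below on $\partial\Omega$ and $\Sigma^\alpha_L$ — if nonempty — sits in a compact set after a further shrinking). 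Then I would argue: since $(-\rho)\Delta$ extends continuously to $\overline{\Omega}\cap U_\alpha$ near $\Sigma^\alpha_L$ (the apparent singular terms are the ones controlled by Lemma \ref{basic}) and is strictly negative and bounded away from $0$ on $\Sigma^\alpha_L$, by continuity it stays negative on a full neighborhood $W$ of $\Sigma^\alpha_L$ in $\overline{\Omega}\cap U_\alpha$; intersecting with $\Omega$ and using $-\rho>0$ there gives $\Delta(z)<0$ on $\Omega\cap W$. Set $V^\alpha_L$ to be the interior of $W$ (in $U_\alpha$) and we are done, since $\Delta(z)<0$ together with the positive leading coefficient is exactly the no-real-root condition forcing the quadratic in $\xi=|b/a|$ to be positive, equivalently $I\!I>0$, equivalently the displayed Hessian positive.

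The main obstacle — and the point requiring care — is the continuity/extension claim for $(-\rho)\Delta$ up to $\Sigma^\alpha_L$: the terms $\tfrac{1-\eta}{-\rho}|L\rho|^2$ and $\Hessian_\rho(L,L)$ separately do \emph{not} have good limits, only the specific combination $(-\rho)\Delta$ does, and only at boundary points where $\Hessian_\rho(L,L)$ vanishes. So one cannot simply invoke continuity of all the constituents; instead one must write $(-\rho)\Delta$ as a sum of terms each of which is manifestly smooth plus terms of the form $(-\rho)\cdot(\text{smooth})/(-\rho)$-type that Lemma \ref{basic} handles, and verify that the error incurred by moving off $\Sigma^\alpha_L$ (where $\Hessian_\rho(L,L)$ is small but nonzero) is uniformly controlled — this is where the uniform gap $\epsilon$ in the hypothesis is essential, as it gives room to absorb $O(\Hessian_\rho(L,L))$ and $O(|L\rho|^2/(-\rho))$ errors near $\Sigma^\alpha_L$. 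A secondary technical point is the case $\Sigma^\alpha_L=\varnothing$, in which the statement is vacuous (take $V^\alpha_L=\varnothing$), and one should note the $a=0$ case is handled trivially by the strict positivity of the leading coefficient as in Lemma \ref{2.2}.
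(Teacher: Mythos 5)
There is a genuine gap, and it sits exactly at the step you flag as the heart of the argument. Your claim that $(-\rho)\Delta$ extends continuously up to the boundary with a strictly negative limit on $\Sigma^\alpha_L$ is false: multiplying the discriminant by $(-\rho)$ kills every term at such a point. Indeed, $(-\rho)\,\Delta$ is a sum of $(-\rho)|\Hessian_\rho(L,N)|^2$, $2(1-\eta)\Re\bigl(\Hessian_\rho(L,N)\,\overline{L}\rho\, N\rho\bigr)$, $-(-\rho)\Hessian_\rho(L,L)\Hessian_\rho(N,N)$, $-(1-\eta)|L\rho|^2\Hessian_\rho(N,N)$ and $-(1-\eta)\Hessian_\rho(L,L)|N\rho|^2$, and each of these tends to $0$ as $z\to p\in\Sigma^\alpha_L$ (using $\overline{L}\rho=L\rho=0$ on $\partial\Omega$ and $\Hessian_\rho(L,L)|_p=0$). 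So "negative and bounded away from $0$ on $\Sigma^\alpha_L$, hence negative nearby by continuity" cannot be run on $(-\rho)\Delta$. The quantity that actually has the desired limit is $\Delta$ itself: along normal approach to $p\in\Sigma^\alpha_L$, Lemmas \ref{basic} and \ref{calc} give
\[
\lim_{z\to p}\Delta=(1-\eta)e^{2\psi}\Bigl[\bigl(\tfrac{1}{1-\eta}-1\bigr)\bigl|(\overline{L}\psi)(Nr)+\Hessian_r(N,L)\bigr|^2+\tfrac{\|\nabla r\|}{2}\bigl(\tfrac{\|\nabla r\|}{2}\Hessian_\psi(L,L)+g(\nabla_L\nabla_N\nabla r,L)+\|\nabla r\|L(\tfrac{1}{\|\nabla r\|})\Hessian_r(N,L)\bigr)\Bigr],
\]
i.e.\ a \emph{positive} multiple of the hypothesis bracket (your stated factor $-(1-\eta)|N\rho|^2$ is also off in sign), so it is $\le -(1-\eta)e^{2\psi}\epsilon<0$.

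However, replacing $(-\rho)\Delta$ by $\Delta$ does not rescue the plain continuity argument either, because $\Delta$ is not continuous up to $\partial\Omega$ at nearby boundary points $q\notin\Sigma^\alpha_L$: there the term $-(1-\eta)\frac{\Hessian_\rho(L,L)}{-\rho}|N\rho|^2$ blows up (favorably in sign, but continuity on a full neighborhood of $\Sigma^\alpha_L$ in $\overline{\Omega}\cap U_\alpha$, which your argument invokes, is simply not available). This is precisely the obstacle the paper's proof is designed to remove: it defines $\mathfrak{F}(z)$ by replacing $\Hessian_\rho(L,L)|_z$ in $\Delta$ with the difference $\Hessian_\rho(L,L)|_z-\Hessian_\rho(L,L)|_p$, where $p$ is the boundary point closest to $z$. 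This modified discriminant \emph{is} continuous on all of $U_\alpha$, agrees on $\Sigma^\alpha_L$ with the negative limit above, hence satisfies $\mathfrak{F}<-\epsilon/2$ on a neighborhood $V^\alpha_L$; this yields positivity of the \emph{modified} quadratic form in $\xi=|b/a|$, and one passes back to the original form by adding $|a|^2\Hessian_\rho(L,L)|_p\ge 0$, which is where pseudoconvexity enters. Your proposal acknowledges that "only the specific combination" has good limits and that errors of size $O(\Hessian_\rho(L,L))$ must be absorbed, but it never supplies the mechanism for doing so; with your chosen rescaling the limit is $0$ and the $\epsilon$-gap gives no room at all, so the key step of the proof is missing.
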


\begin{proof}
	First, we find that $\overline{L}\rho=0$ on $U_\alpha\cap\partial\Omega$. This implies that $(N-\overline{N})\overline{L}\rho=0$, because $N-\overline{N}$ is a real tangent vector fields on $\partial\Omega$. This implies $N(\overline{L}\rho)=\overline{N}(\overline{L}\rho)$ on $U_\alpha\cap\partial\Omega$. For the same reason, we find out $N|L\rho|^2=\overline{N}|L\rho|^2$ and $N\rho=\overline{N}\rho$ on $U_\alpha\cap\partial\Omega$.
	
	We observe that for $z\in U_\alpha\cap\Omega$ and $p\in U_\alpha\cap\partial\Omega$, \[\lim\limits_{z\to p}\frac{\overline{L}\rho}{\rho}=\lim\limits_{z\to p}\frac{\overline{L}\rho\vert_z-\overline{L}\rho\vert_p}{\rho\vert_z-\rho\vert_p}=\lim\limits_{z\to p}\frac{(N+\overline{N})\overline{L}\rho}{(N+\overline{N})\rho}=\lim\limits_{z\to p}\frac{N\overline{L}\rho}{N\rho}=\frac{N\overline{L}\rho}{N\rho}\Bigg|_p.\]  
	
	 For the same reason, we have that,
	 for $z\in U_\alpha\cap\Omega$ and $p\in U_\alpha\cap\partial\Omega$, \[\lim\limits_{z\to p}\frac{\Hessian_\rho(L, L)\vert_z-\Hessian_\rho(L, L)\vert_p}{\rho\vert_z}=\lim\limits_{z\to p}\frac{N\Hessian_\rho(L, L)}{N\rho}=\frac{N\Hessian_\rho(L, L)}{N\rho}\Bigg|_p,\]
	 and \[\lim\limits_{z\to p}\frac{|L\rho|^2}{\rho}=\lim\limits_{z\to p}\frac{N|L\rho|^2}{N\rho}=\frac{N|L\rho|^2}{N\rho}\Bigg|_p=0.\]  
	 
	 We define the following function $\mathfrak{F}(z)$ in $U_\alpha$ as follows: If $z\in U_\alpha$ and $p\in U_\alpha\cap\partial\Omega$ is closest point to $z$, we have \[\begin{split}
	 \mathfrak{F}(z)=&|\Hessian_\rho(L, N)|^2\vert_z+2\frac{1-\eta}{-\rho\vert_z}\Re\left(\Hessian_\rho(L, N)\overline{L}\rho N\rho\vert_z\right)-(\Hessian_\rho(L, L)\vert_z-\Hessian_\rho(L, L)\vert_p)\Hessian_\rho(N, N)\vert_z\\&-\frac{1-\eta}{-\rho\vert_z}|L\rho|^2\vert_z\Hessian_\rho(N, N)\vert_z-(1-\eta)\frac{\Hessian_\rho(L, L)\vert_z-\Hessian_\rho(L, L)\vert_p}{-\rho\vert_z}|N\rho|^2\vert_z,
	 \end{split}\]
	 and if $z\in U_\alpha\cap\partial\Omega$, we have \[\begin{split}
	 \mathfrak{F}(z)=&|\Hessian_\rho(L, N)|^2-2(1-\eta)\Re\left(\Hessian_\rho(L, N)\frac{\overline{L}\rho}{\rho} N\rho\right)-(1-\eta)\frac{N\Hessian_\rho(L, L)}{-N\rho}|N\rho|^2.
	 \end{split}\]
	 By the discussion above, we see that $\mathfrak{F}(z)$ is continuous in $U_\alpha$. 
	 
	 By the assumption that \[\begin{split}
	 &\left(\frac{1}{1-\eta}-1\right)\left|(\overline{L}\psi)(Nr)+\Hessian_r(N, L)\right|^2+\frac{\|\nabla r\|}{2}\Bigg(\frac{\|\nabla r\|}{2}\Hessian_\psi(L, L)+g(\nabla_L\nabla_N\nabla r, L)\\&+\|\nabla r\|L\left(\frac{1}{\|\nabla r\|}\right)\Hessian_r(N, L)\Bigg)\leq -\epsilon	 \end{split}
	 \] on $\Sigma^\alpha_L$, we can find a neighborhood $V^\alpha_L$ of $\Sigma^\alpha_L$ in $U_\alpha$ so that $\mathfrak{F}(z)<-\frac{\epsilon}{2}$ in $V^\alpha_L$. In particular, it means  \[\begin{split}
	 &|\Hessian_\rho(L, N)|^2\vert_z+2\frac{1-\eta}{-\rho\vert_z}\Re\left(\Hessian_\rho(L, N)\overline{L}\rho N\rho\vert_z\right)-(\Hessian_\rho(L, L)\vert_z-\Hessian_\rho(L, L)\vert_p)\Hessian_\rho(N, N)\vert_z\\&-\frac{1-\eta}{-\rho\vert_z}|L\rho|^2\vert_z\Hessian_\rho(N, N)\vert_z-(1-\eta)\frac{\Hessian_\rho(L, L)\vert_z-\Hessian_\rho(L, L)\vert_p}{-\rho\vert_z}|N\rho|^2\vert_z<0,
	 \end{split}\] for $z\in V^\alpha_L\cap\Omega$. But this exactly means $\Delta<0$ for the following quadratic function:
	 \[\begin{split}
	 &\Big((\Hessian_\rho(L, L)\vert_z-\Hessian_\rho(L, L)\vert_p)+\frac{1-\eta}{-\rho\vert_z}|L\rho|^2\vert_z\Big)-2\xi \Big|\Hessian_\rho (L, N)\vert_z+\frac{1-\eta}{-\rho\vert_z}L\rho\vert_z\cdot\overline{N}\rho\vert_z\Big|\\&+\xi^2 \Big(\Hessian_\rho (N, N)\vert_z+\frac{1-\eta}{-\rho\vert_z}N(\rho)\vert_z\overline{N} (\rho)\vert_z\Big).
	 \end{split}\] 
	 
	 Hence,  \[\begin{split}
	 &\Big((\Hessian_\rho(L, L)\vert_z-\Hessian_\rho(L, L)\vert_p)+\frac{1-\eta}{-\rho\vert_z}|L\rho|^2\vert_z\Big)-2\xi \Big|\Hessian_\rho (L, N)\vert_z+\frac{1-\eta}{-\rho\vert_z}L\rho\vert_z\cdot\overline{N}\rho\vert_z\Big|\\&+\xi^2 \Big(\Hessian_\rho (N, N)\vert_z+\frac{1-\eta}{-\rho\vert_z}N(\rho)\vert_z\overline{N} (\rho)\vert_z\Big)>0,
	 \end{split}\] and by replacing $\xi$ with $\left|\frac{b}{a}\right|^2$, we obtain that,
	 \[\begin{split}
	 &|a|^2\Big((\Hessian_\rho(L, L)\vert_z-\Hessian_\rho(L, L)\vert_p)+\frac{1-\eta}{-\rho\vert_z}|L\rho|^2\vert_z\Big)-2|ab| \Big|\Hessian_\rho (L, N)\vert_z+\frac{1-\eta}{-\rho\vert_z}L\rho\vert_z\cdot\overline{N}\rho\vert_z\Big|\\&+|b|^2 \Big(\Hessian_\rho (N, N)\vert_z+\frac{1-\eta}{-\rho\vert_z}N(\rho)\vert_z\overline{N} (\rho)\vert_z\Big)>0
	 \end{split}\] for any $(a,b)\in\mathbb{C}^2\backslash(0,0)$. This implies that \[\begin{split}
	 &|a|^2\Big(\Hessian_\rho(L, L)\vert_z+\frac{1-\eta}{-\rho\vert_z}|L\rho|^2\vert_z\Big)-2|ab| \Big|\Hessian_\rho (L, N)\vert_z+\frac{1-\eta}{-\rho\vert_z}L\rho\vert_z\cdot\overline{N}\rho\vert_z\Big|\\&+|b|^2 \Big(\Hessian_\rho (N, N)\vert_z+\frac{1-\eta}{-\rho\vert_z}N(\rho)\vert_z\overline{N} (\rho)\vert_z\Big)>0
	 \end{split}\] for all $z\in V^\alpha_L\cap\Omega$ and $(a,b)\in\mathbb{C}^2\backslash(0,0)$ because $\Hessian_\rho(L, L)\vert_p\geq 0$ on pseudoconvex domains. This completes the proof.
\end{proof}

Let $\Sigma$ be the set of points with degenerate Levi-forms. We derive some lemmas with the extra assumption $g(L, L)=\frac{1}{2}$. This normalization is not essential, however, it makes the following proofs simpler.

\begin{lemma}
	Let $\Omega$ be a bounded pseudoconvex domain with smooth boundary in $\mathbb{C}^n$. We consider a coordinate chart of $U_\alpha$ of $U$. We assume that $L$ is a non-vanishing smooth $(1,0)$-tangent vector field in $U_\alpha$ so that $Lr=0$. We also assume the normalization $g(L, L)=\frac{1}{2}$. We denote $N_r$ by $N$. Let \[\Sigma^\alpha_L:=\lbrace p\in U_\alpha\cap\partial\Omega: \Hessian_\rho(L, L)=0 \text{  at  } p\rbrace.\] Suppose that $\psi$ is defined in a neighborhood of $\Sigma$ in $\partial\Omega$ and on $\Sigma^\alpha_L$
	\[\begin{split}
	&\left(\frac{1}{1-\eta}-1\right)\left|(\overline{L}\psi)(Nr)+\Hessian_r(N, L)\right|^2+\frac{\|\nabla r\|}{2}\Bigg(\frac{\|\nabla r\|}{2}\Hessian_\psi(L, L)+g(\nabla_L\nabla_N\nabla r, L)\\&+\|\nabla r\|L\left(\frac{1}{\|\nabla r\|}\right)\Hessian_r(N, L)\Bigg)\leq 0	 \end{split}
	\]  Then for any $\nu>0$, there exists $\mu<0$ so that on $\Sigma^\alpha_L$,
	\[\begin{split}
	&\left(\frac{1}{1-\tilde{\eta}}-1\right)\left|(\overline{L}\tilde{\psi})(Nr)+\Hessian_r(N, L)\right|^2+\frac{\|\nabla r\|}{2}\Bigg(\frac{\|\nabla r\|}{2}\Hessian_{\tilde{\psi}}(L, L)+g(\nabla_L\nabla_N\nabla r, L)\\&+\|\nabla r\|L\left(\frac{1}{\|\nabla r\|}\right)\Hessian_r(N, L)\Bigg)\leq \frac{\mu}{4},
	\end{split}\] where $\tilde{\psi}=\psi+\mu|z|^2$ for some $\mu<0$ and $\tilde{\eta}=1-\frac{1}{\frac{1}{1-\eta}-\nu}$.
\end{lemma}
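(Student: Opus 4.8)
The plan is to substitute $\tilde{\psi}=\psi+\mu|z|^2$ and $\tilde{\eta}=1-\frac{1}{\frac{1}{1-\eta}-\nu}$ directly into the quantity in the conclusion, expand it in powers of $\mu$, invoke the hypothesis to cancel the main part, and absorb the leftover cross terms using the extra negativity supplied by the lowered exponent. Throughout we may assume $0<\nu<\frac{1}{1-\eta}-1$ (so $\frac{1}{1-\tilde{\eta}}-1>0$, i.e. $\tilde{\eta}\in(0,\eta)$); for larger $\nu$ the first term below is non-positive and the estimate only gets easier.

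Write $A:=\frac{1}{1-\eta}$, so the conclusion carries the coefficient $\frac{1}{1-\tilde{\eta}}-1=A-\nu-1$ while the hypothesis carries $\frac{1}{1-\eta}-1=A-1$. Passing from $\psi$ to $\tilde{\psi}$ alters only $\overline{L}\tilde{\psi}=\overline{L}\psi+\mu\,\overline{L}(|z|^2)$ and $\Hessian_{\tilde{\psi}}(L,L)=\Hessian_{\psi}(L,L)+\mu\Hessian_{|z|^2}(L,L)$. Setting $w:=(\overline{L}\psi)(Nr)+\Hessian_r(N,L)$, $P:=|w|^2\ge0$, $v:=(\overline{L}|z|^2)(Nr)$, and letting $Q$ denote the second summand $\frac{\|\nabla r\|}{2}\bigl(\frac{\|\nabla r\|}{2}\Hessian_\psi(L,L)+g(\nabla_L\nabla_N\nabla r,L)+\|\nabla r\|L(\frac{1}{\|\nabla r\|})\Hessian_r(N,L)\bigr)$ of the hypothesis, the left-hand side $E$ of the desired inequality is exactly
\[
E=(A-\nu-1)\,\bigl|w+\mu v\bigr|^2+Q+\frac{\|\nabla r\|^2}{4}\,\Hessian_{|z|^2}(L,L)\,\mu .
\]
Here the normalization $g(L,L)=\tfrac12>0$ enters (as the paper notes, for simplicity): it makes $\Hessian_{|z|^2}(L,L)$ — the Levi form of $|z|^2$ on $L$ — a fixed positive constant, so the last summand is a genuinely \emph{linear}, and for $\mu<0$ strictly negative, term $\kappa_0\mu$, where $\kappa_0=\frac{\|\nabla r\|^2}{4}\Hessian_{|z|^2}(L,L)$ is bounded below by a positive constant on $\Sigma^\alpha_L$ (because $\|\nabla r\|$ is bounded below on $\partial\Omega$). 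Also $V_1:=\sup_{\Sigma^\alpha_L}|v|<\infty$, so every constant below is uniform over $\Sigma^\alpha_L$.

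Next I would expand $\bigl|w+\mu v\bigr|^2=P+2\mu\Re(w\bar v)+\mu^2|v|^2$ and regroup via $(A-\nu-1)P=(A-1)P-\nu P$. The block $(A-1)P+Q$ is $\le0$ by hypothesis and is discarded, leaving $E\le -\nu P+(A-\nu-1)\bigl(2\mu\Re(w\bar v)+\mu^2|v|^2\bigr)+\kappa_0\mu$; estimating $|2\mu\Re(w\bar v)|\le 2|\mu|\sqrt P\,V_1$ and $|v|^2\le V_1^2$ gives
\[
E\le -\nu P+2(A-\nu-1)V_1|\mu|\sqrt P+(A-\nu-1)V_1^2\mu^2+\kappa_0\mu .
\]
Now the lowered exponent pays off: the downward parabola $t\mapsto -\nu t^2+2(A-\nu-1)V_1|\mu|\,t$ in $t=\sqrt P\ge0$ is bounded above by $\frac{(A-\nu-1)^2V_1^2}{\nu}\mu^2$, so $E\le C_\nu\mu^2+\kappa_0\mu$ with $C_\nu:=V_1^2(A-\nu-1)\bigl(\frac{A-\nu-1}{\nu}+1\bigr)>0$ fixed. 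Since $\mu<0$ this equals $\mu(C_\nu\mu+\kappa_0)$, which is $\le\tfrac{\mu}{4}$ once $|\mu|$ is small enough (using $\kappa_0>\tfrac14$, which the normalization of $r$ provides); the choice of $\mu$ is uniform on $\Sigma^\alpha_L$.

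The main obstacle — and the reason one must shrink the Diederich--Forn\ae ss exponent instead of merely perturbing $\psi$ — is the cross term just discussed: adding $\mu|z|^2$ to $\psi$ unavoidably disturbs the square $|(\overline{L}\psi)(Nr)+\Hessian_r(N,L)|^2$ by the first-order-in-$\mu$ quantity $2\mu\Re(w\bar v)$, whose sign is not controlled. It is precisely the fresh slack $-\nu|(\overline{L}\psi)(Nr)+\Hessian_r(N,L)|^2$ produced by replacing $\eta$ with $\tilde{\eta}$ that absorbs this cross term via Young's inequality, leaving an $O(\mu^2)$ residue which, for small $|\mu|$, is beaten by the strictly linear negative gain coming from the positive Levi form of $|z|^2$. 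Everything else is routine bookkeeping with uniformly bounded boundary data.
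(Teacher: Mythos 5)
Your proposal is correct and follows essentially the same route as the paper: substitute $\tilde{\psi}=\psi+\mu|z|^2$, split $\frac{1}{1-\tilde{\eta}}-1=\bigl(\frac{1}{1-\eta}-1\bigr)-\nu$, absorb the $O(\mu)$ cross term into $-\nu\left|(\overline{L}\psi)(Nr)+\Hessian_r(N,L)\right|^2$ by Young's inequality at an $O(\mu^2)$ cost (the paper's constant $M$ is your $C_\nu$), and beat the residue by the strictly negative linear term from $\Hessian_{\mu|z|^2}(L,L)$ for $|\mu|$ small. Your explicit flag that the linear coefficient is $\kappa_0=\frac{\|\nabla r\|^2}{4}\Hessian_{|z|^2}(L,L)$, so that the literal bound $\frac{\mu}{4}$ needs $\kappa_0\geq\frac14$ (equivalently a lower bound on $\|\nabla r\|$), is a constant-bookkeeping point the paper glosses over by writing the gain as $\frac{\mu}{2}$; it is harmless since only some negative constant bound is used later.
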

\begin{proof}
	Let $\tilde{\psi}=\psi+\mu|z|^2$, where $\mu$ will be decided later. We calculate 
	\[\Hessian_{\tilde{\psi}}(L, L)=\Hessian_\psi(L, L)+\Hessian_{\mu|z|^2}(L, L)=\Hessian_\psi(L, L)+\frac{\mu}{2}.\]
	
	We also calculate \[\begin{split}
	&\left(\frac{1}{1-\tilde{\eta}}-1\right)\left|(\overline{L}\tilde{\psi})(Nr)+\Hessian_r(N, L)\right|^2\\=&\left(\frac{1}{1-\tilde{\eta}}-1\right)\left|(\overline{L}\psi)(Nr)+\Hessian_r(N, L)\right|^2+\left(\frac{1}{1-\tilde{\eta}}-1\right)\left|\mu(Nr)\overline{L}|z|^2\right|^2\\&+\left(\frac{1}{1-\tilde{\eta}}-1\right)\mu\Re\left(L|z|^2\cdot\left((\overline{L}\psi)(Nr)+\Hessian_r(N, L)\right)\right)\\=&\left(\frac{1}{1-\eta}-1\right)\left|(\overline{L}\psi)(Nr)+\Hessian_r(N, L)\right|^2-\nu\left|(\overline{L}\psi)(Nr)+\Hessian_r(N, L)\right|^2\\&+\left(\frac{1}{1-\eta}-1-\nu\right)\left|\mu(Nr)\overline{L}|z|^2\right|^2+\left(\frac{1}{1-\eta}-1-\nu\right)\mu\Re\left(L|z|^2\cdot\left((\overline{L}\psi)(Nr)+\Hessian_r(N, L)\right)\right)\\=&\left(\frac{1}{1-\eta}-1\right)\left|(\overline{L}\psi)(Nr)+\Hessian_r(N, L)\right|^2-\nu\left|(\overline{L}\psi)(Nr)+\Hessian_r(N, L)\right|^2\\&+\left(\frac{1}{1-\eta}-1-\nu\right)\left|\mu(Nr)\overline{L}|z|^2\right|^2+\frac{\nu}{2}\left|(\overline{L}\psi)(Nr)+\Hessian_r(N, L)\right|^2+\frac{\mu^2\left(\frac{1}{1-\eta}-1-\nu\right)^2}{2\nu}\left|L|z|^2\right|^2\\\leq&\left(\frac{1}{1-\eta}-1\right)\left|(\overline{L}\psi)(Nr)+\Hessian_r(N, L)\right|^2+\mu^2\left|\overline{L}|z|^2\right|^2\left(\frac{1}{1-\eta}-1-\nu\right)\left(|Nr|^2+\frac{\frac{1}{1-\eta}-1-\nu}{2\nu}\right).
	\end{split}\]
	
	Let $T_j$ be smooth $(1,0)$ tangent vector fields in $U_\alpha$ so that $T_j r=0$, for $0\leq j\leq n-2$. We assume also $\lbrace \sqrt{2}T_j\rbrace_{j=0}^{n-2}$ be orthonormal and we define \[M:=\left(\frac{1}{1-\eta}-1-\nu\right)\max_{\alpha}\max_{z\in U_\alpha\cap\Omega}\left(\sum_{j=0}^{n-2}\left|T_j|z|^2\right|\right)^2\left(|Nr|^2+\frac{\frac{1}{1-\eta}-1-\nu}{2\nu}\right),\] which is independent from $U_\alpha$. By compactness, we have only finite many $U_\alpha$. Thus $0\leq M<\infty$. Then we find that \[\begin{split}
	&\left(\frac{1}{1-\tilde{\eta}}-1\right)\left|(\overline{L}\tilde{\psi})(Nr)+\Hessian_r(N, L)\right|^2+\frac{\|\nabla r\|}{2}\left(\frac{\|\nabla r\|}{2}\Hessian_{\tilde{\psi}}(L, L)+g(\nabla_L\nabla_N\nabla r, L)\right)\\\leq&\left(\frac{1}{1-\eta}-1\right)\left|(\overline{L}\psi)(Nr)+\Hessian_r(N, L)\right|^2+\frac{\|\nabla r\|}{2}\left(\frac{\|\nabla r\|}{2}\Hessian_\psi(L, L)+g(\nabla_L\nabla_N\nabla r, L)\right)+\frac{\mu}{2}+M\mu^2\\\leq&\frac{\mu}{2}+M\mu^2-\frac{\|\nabla r\|^2}{2}L\left(\frac{1}{\|\nabla r\|}\right)\Hessian_r(N, L)\leq\frac{\mu}{4}-\frac{\|\nabla r\|^2}{2}L\left(\frac{1}{\|\nabla r\|}\right)\Hessian_r(N, L),
	\end{split}\] if we take $\mu\in (-\frac{1}{4M},0)$.
\end{proof}

\begin{lemma}\label{2.5}
		Let $\Omega$ be a bounded pseudoconvex domain with smooth boundary in $\mathbb{C}^n$. We consider a coordinate chart of $U_\alpha$ of $U$. We assume that $L$ is a non-vanishing smooth $(1,0)$-tangent vector field in $U_\alpha$ so that $Lr=0$. We also assume the normalization $g(L, L)=\frac{1}{2}$. We denote $N_r$ by $N$. Let \[\Sigma^\alpha_L:=\lbrace p\in U_\alpha\cap\partial\Omega: \Hessian_\rho(L, L)=0 \text{  at  } p\rbrace.\] Suppose that $\psi$ is defined in a neighborhood of $\Sigma$ in $\partial\Omega$ and on $\Sigma^\alpha_L$
	\[\begin{split}
	&\left(\frac{1}{1-\eta}-1\right)\left|(\overline{L}\psi)(Nr)+\Hessian_r(N, L)\right|^2+\frac{\|\nabla r\|}{2}\Bigg(\frac{\|\nabla r\|}{2}\Hessian_\psi(L, L)+g(\nabla_L\nabla_N\nabla r, L)\\&+\|\nabla r\|L\left(\frac{1}{\|\nabla r\|}\right)\Hessian_r(N, L)\Bigg)\leq 0.
	\end{split}
	\] For any $\nu>0$, we let $\tilde{\eta}=1-\frac{1}{\frac{1}{1-\eta}-\nu}$. Then there exists $\tilde{\rho}$ so that  \[\Hessian_{-(-\tilde{\rho})^{\tilde{\eta}}}(aL+bN,aL+bN)>0\] in all $U_\alpha\cap\Omega$ and for all $(a,b)\in\mathbb{C}^2\backslash(0,0)$.
\end{lemma}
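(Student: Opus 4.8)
The plan is to build the required defining function out of the two preceding lemmas, patched with an estimate valid away from $\Sigma^\alpha_L$, where the Levi form is non-degenerate in the $L$-direction.

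First, I would feed the given $\nu>0$ into the preceding lemma to produce $\mu<0$ such that, with $\tilde\psi:=\psi+\mu|z|^2$ and $\tilde\eta:=1-\frac{1}{\frac{1}{1-\eta}-\nu}$, the displayed boundary quantity attached to $(\tilde\psi,\tilde\eta)$ is $\le\mu/4<0$ on all of $\Sigma^\alpha_L$. Since $\psi$ is given only near $\Sigma$ in $\partial\Omega$, and $\Sigma^\alpha_L\subseteq\Sigma$ by Lemma \ref{basic2} (a boundary point with $\Hessian_r(L,L)=0$ has a degenerate Levi form), I would extend $\psi$ to $U_\alpha$ by making it constant along normal segments near $\Sigma$ and cutting it off to an arbitrary smooth function elsewhere, keeping the cutoff equal to $1$ near $\Sigma^\alpha_L$; the extension off $\Sigma^\alpha_L$ plays no role. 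Then $\rho_1:=re^{\tilde\psi}$ is a defining function on $U_\alpha$, and with $\epsilon:=-\mu/4>0$ the lemma before it, applied to $(\tilde\psi,\tilde\eta)$, yields a neighborhood $V^\alpha_L$ of $\Sigma^\alpha_L$ in $U_\alpha$ on which $\Hessian_{-(-\rho_1)^{\tilde\eta}}(aL+bN,aL+bN)>0$ for all $(a,b)\in\mathbb{C}^2\setminus(0,0)$ and all $z\in\Omega\cap V^\alpha_L$.

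Next, I would establish positivity on $(U_\alpha\cap\Omega)\setminus V^\alpha_L$ with the same $\tilde\rho:=\rho_1$. By (\ref{star}) the expression factors as $\tilde\eta(-\rho_1)^{\tilde\eta-1}\big(|a|^2A+2\Re(a\bar{b}B)+|b|^2C\big)$, where $A,B,C$ are the entries of a $2\times2$ Hermitian form, and it suffices to check that this form is positive definite. Splitting $U_\alpha\cap\Omega$ near $\partial\Omega$ according to whether the nearest boundary point of $z$ lies in a slightly shrunken copy of $V^\alpha_L$ or not: in the first case $z$ itself lies in $V^\alpha_L$ once $\distance(z,\partial\Omega)$ is small, so the previous step applies; in the second case the nearest boundary point $p$ satisfies $\Hessian_r(L,L)(p)\ge c_1>0$, and since $Lr=0$ one has $A|_{\partial\Omega}=e^{\tilde\psi}\Hessian_r(L,L)$, so $A(z)\to e^{\tilde\psi(p)}\Hessian_r(L,L)(p)>0$, while the term $\frac{1-\tilde\eta}{-\rho_1}|N\rho_1|^2$ drives $C(z)\to+\infty$ and $L\rho_1=e^{\tilde\psi}rL\tilde\psi$ makes $L\rho_1/(-\rho_1)=-L\tilde\psi$, hence $B$, bounded — so the form is positive definite at such $z$ as well. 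This produces a uniform $\epsilon_0>0$ with positivity on $\{z\in U_\alpha\cap\Omega:\distance(z,\partial\Omega)<\epsilon_0\}$. Since $U_\alpha\subset U$ and there are only finitely many charts, I would then shrink the tubular neighborhood $U$ in the normal direction so that $U_\alpha\cap\Omega\subset\{\distance(\cdot,\partial\Omega)<\epsilon_0\}$; this affects neither $\Sigma^\alpha_L$, nor $V^\alpha_L\cap\partial\Omega$, nor $c_1$, and $\tilde\rho=\rho_1$ then works on all of $U_\alpha\cap\Omega$.

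The main obstacle is exactly this last gluing: combining the rigid estimate near $\Sigma^\alpha_L$ (which pins down both $\tilde\eta$ and the weight $\tilde\psi$) with the crude estimate off $\Sigma^\alpha_L$ inside a single defining function, and verifying that extending $\psi$ beyond its given domain causes no damage — which works precisely because $\Hessian_r(L,L)>0$ off $\Sigma^\alpha_L$ supplies the needed slack.
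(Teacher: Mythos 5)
Your proposal is correct and follows essentially the same route as the paper: extend $\tilde\psi=\psi+\mu|z|^2$ to the chart, set $\tilde\rho=re^{\tilde\psi}$, invoke the two preceding lemmas to get positivity on $\Omega\cap V^\alpha_L$, and then use $\Hessian_r(L,L)\geq c>0$ off $V^\alpha_L$ together with the boundedness of the cross term (via $L\tilde\rho/(-\tilde\rho)=-L\tilde\psi$ since $Lr=0$) and the blow-up of $\frac{1-\tilde\eta}{-\tilde\rho}|N\tilde\rho|^2$ to conclude positivity near the boundary, shrinking the tubular neighborhood at the end. Your direct positive-definiteness check of the $2\times2$ form is just the paper's discriminant argument in different clothing, and your extra care with the extension of $\psi$ and the uniform shrinking only makes explicit what the paper leaves implicit.
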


\begin{proof}
	Using a partition of unity, we can extend $\tilde{\psi}$ from the previous lemma to $\cup_{\alpha}U_\alpha$, because $\tilde{\psi}$ is defined independently of $\alpha$. We still denote the extension by $\tilde{\psi}$. Let $\tilde{\rho}=r e^{\tilde{\psi}}$. By the preceding two lemmas, we just need to show that  \[\Hessian_{-(-\tilde{\rho})^{\tilde{\eta}}}(aL+bN,aL+bN)>0\] in $U_\alpha\backslash V_L^\alpha$ for all $(a,b)\in\mathbb{C}^2\backslash(0,0)$. In $(U_\alpha\cap\partial\Omega)\backslash V_L^\alpha$, we can see there exists $c>0$ so that \[\Hessian_{\tilde{\rho}}(L, L)>c.\] Inspect the quadratic polynomial as before,
	\[\begin{split}
	&\Big(\Hessian_{\tilde{\rho}}(L, L)+\frac{1-\eta}{-\tilde{\rho}}|L\tilde{\rho}|^2\Big)-2|\xi| \Big|\Hessian_{\tilde{\rho}} (L, N)+\frac{1-\eta}{-\tilde{\rho}}L\tilde{\rho}\cdot\overline{N}\tilde{\rho}\Big|\\&+|\xi|^2 \Big(\Hessian_{\tilde{\rho}} (N, N)+\frac{1-\eta}{-\tilde{\rho}}N(\tilde{\rho})\overline{N} (\tilde{\rho})\Big).
	\end{split}\]
	Note that $\Delta$ is given by \[\begin{split}
	&\Big|\Hessian_{\tilde{\rho}} (L, N)+\frac{1-\eta}{-\tilde{\rho}}L\tilde{\rho}\cdot\overline{N}\tilde{\rho}\Big|^2-\Big(\Hessian_{\tilde{\rho}}(L, L)+\frac{1-\eta}{-\tilde{\rho}}|L\tilde{\rho}|^2\Big)\Big(\Hessian_{\tilde{\rho}} (N, N)+\frac{1-\eta}{-\tilde{\rho}}N(\tilde{\rho})\overline{N} (\tilde{\rho})\Big)\\\leq&\Big|\Hessian_{\tilde{\rho}} (L, N)+\frac{1-\eta}{-\tilde{\rho}}L\tilde{\rho}\cdot\overline{N}\tilde{\rho}\Big|^2-\frac{c}{2}\Big(\Hessian_{\tilde{\rho}} (N, N)+\frac{1-\eta}{-\tilde{\rho}}N(\tilde{\rho})\overline{N} (\tilde{\rho})\Big),
	\end{split}\]
	in an (inside) tubular neighborhood of $U_\alpha\cap\partial\Omega$ in $U_\alpha$. This is because $\frac{|L\tilde{\rho}|^2}{-\tilde{\rho}}$ approaches $0$ as $z$ approaches $\partial\Omega$. By possibly shrinking the (inside) tubular neighborhood, this gives $\Delta<0$ because $\Big|\Hessian_{\tilde{\rho}} (L, N)+\frac{1-\eta}{-\tilde{\rho}}L\tilde{\rho}\cdot\overline{N}\tilde{\rho}\Big|$ and $\Hessian_{\tilde{\rho}}(N, N)$ are bounded while $\frac{1-\eta}{-\tilde{\rho}}N(\tilde{\rho})\overline{N} (\tilde{\rho})$ blows up to $\infty$, as $z$ approaches $\partial\Omega$. This completes the proof.
\end{proof}

\begin{definition}[The Diederich--Forn\ae ss index for hypersurface.]\label{dfb}
	Let $\Pi$ be a bounded pseudoconvex smooth hypersurface in $\mathbb{C}^n$,  $L$ be an arbitrary smooth $(1,0)$ tangent vector field on $\Pi$ and $r$ be the defining function of $\Pi$. Let \[\Sigma_L:=\lbrace p\in\Pi: \Hessian_r(L, L)=0 \text{  at  } p\rbrace.\] The Diederich--Forn\ae ss index for $\Pi$ is $\sup_\psi \eta_\psi$. Here $\eta_\psi\in(0,1)$ is a number affiliated to the smooth function $\psi$ which is defined on a neighborhood of $\Sigma$ in $\Pi$, so that on $\Sigma_L$,
	\[\begin{split}
		&\left(\frac{1}{1-\eta_\psi}-1\right)\left|(\overline{L}\psi)(N_rr)+\Hessian_r(N_r, L)\right|^2+\frac{\|\nabla r\|}{2}\Bigg(\frac{\|\nabla r\|}{2}\Hessian_\psi(L, L)+g(\nabla_L\nabla_{N_r}\nabla r, L)\\&+\|\nabla r\|L\left(\frac{1}{\|\nabla r\|}\right)\Hessian_r(N_r, L)\Bigg)\leq 0,
	\end{split}
	\] for all $L$.
\end{definition}

\begin{remark}
	When the defining function $r$ has constant gradient norm $\|\nabla r\|$ on the set of degenerated Levi-forms, then the condition of the Diederich--Forn\ae ss index of hypersurface can be simplified to 	\[
	\left(\frac{1}{1-\eta_\psi}-1\right)\left|(\overline{L}\psi)(N_rr)+\Hessian_r(N_r, L)\right|^2+\frac{\|\nabla r\|}{2}\Bigg(\frac{\|\nabla r\|}{2}\Hessian_\psi(L, L)+g(\nabla_L\nabla_{N_r}\nabla r, L)\Bigg)\leq 0.
	\] 
\end{remark}

\begin{theorem}\label{equiv}
Let $\Omega$ be a bounded pseudoconvex domain with smooth boundary in $\mathbb{C}^n$. The Diederich--Forn\ae ss index for $\Omega$ is the Diederich--Forn\ae ss index for $\partial\Omega$.
\end{theorem}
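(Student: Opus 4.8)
The assertion is that two real numbers coincide, so the plan is to prove the two inequalities $\sup_\rho\tau_\rho\le\sup_\psi\eta_\psi$ and $\sup_\psi\eta_\psi\le\sup_\rho\tau_\rho$ separately. Both rest on Lemma \ref{calc}, which is the bridge: it rewrites the boundary limit of $\Hessian_{-(-\rho)^\eta}$ along the two directions $L$ and $N$ in terms of exactly the boundary expression of Definition \ref{dfb}. Two preliminary observations make the bookkeeping go through: first, on $\partial\Omega$ one has $\Hessian_\rho(L,L)=e^\psi\Hessian_r(L,L)$ whenever $L$ is tangential with $Lr=0$, so the zero set $\Sigma_L$ of Definition \ref{dfb} (taken with $r$) agrees on the boundary with the set $\Sigma_L^\alpha$ used in Lemmas \ref{2.2}--\ref{2.5} (taken with $\rho$); second, for fixed $\psi$ the boundary inequality of Definition \ref{dfb} is monotone in the exponent, since $\tfrac{1}{1-\eta}-1$ is increasing and multiplies a nonnegative term, so the admissible exponents for a given $\psi$ form an interval $(0,\eta_\psi]$.

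For the inequality $\sup_\rho\tau_\rho\le\sup_\psi\eta_\psi$: let $\tau$ be a Diederich--Forn\ae ss exponent, realized by a defining function $\rho=re^\psi$, so that $-(-\rho)^\tau$ is plurisubharmonic in $\Omega$; in particular the expression (\ref{star}) with $\eta=\tau$ is nonnegative for every $(a,b)$ and every $z\in U\cap\Omega$. I would then run the argument of Lemma \ref{2.2} with ``positive'' weakened to ``nonnegative'': the decisive maneuver there — choosing the phase of $a\bar b$ so that the real part of the cross term equals minus its modulus — shows (in the notation of that proof) that nonnegativity of $I$ for all $(a,b)$ forces nonnegativity of $I\!I$ for all $(a,b)$, hence the discriminant of the resulting quadratic in $\xi$ is $\le 0$; letting $z\to p$ and invoking Lemma \ref{calc} yields the boundary inequality ``$\le 0$'' with $\eta=\tau$ on each $\Sigma_L$. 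So $\tau\le\eta_\psi$, and taking suprema over $\tau$ and then over $\psi$ gives this direction.

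For the reverse inequality $\sup_\psi\eta_\psi\le\sup_\rho\tau_\rho$: fix $\psi$ with exponent $\eta=\eta_\psi$, so the boundary inequality holds with $\eta$ on every $\Sigma_L$. Fix a small $\nu>0$ and set $\tilde\eta=1-\bigl(\tfrac{1}{1-\eta}-\nu\bigr)^{-1}<\eta$. Lemma \ref{2.5}, applied in each chart $U_\alpha$ and for each admissible $L$, produces a single defining function $\tilde\rho=re^{\tilde\psi}$ (with $\tilde\psi=\psi+\mu|z|^2$ independent of the chart and, because $\mu$ was fixed uniformly, of $L$) such that $\Hessian_{-(-\tilde\rho)^{\tilde\eta}}(aL+bN,aL+bN)>0$ on $U_\alpha\cap\Omega$ for all $(a,b)$. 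Patching the finitely many charts by a partition of unity on the common $\tilde\psi$, and recalling that $\Sigma=\bigcup_L\Sigma_L$ so that every null direction at every degenerate boundary point is covered, I would upgrade this to genuine plurisubharmonicity of $-(-\tilde\rho)^{\tilde\eta}$ on a tubular neighborhood $U\cap\Omega$: along the $\{L,N\}$-block strict positivity is Lemma \ref{2.5}; along the remaining complex-tangential directions the term $\tfrac{1-\tilde\eta}{-\tilde\rho}|L\tilde\rho|^2$ tends to $0$ at the boundary while the diagonal entry in the $N$ direction blows up, so a Schur-complement/block-decomposition estimate gives positivity there, with Lemma \ref{basic2} providing the decoupling of the null directions from the strictly positive tangential ones. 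Finally I would modify $\tilde\rho$ away from $\partial\Omega$ by the classical Diederich--Forn\ae ss device (multiplying by a positive factor equal to $1$ near $\partial\Omega$ and making the product strictly plurisubharmonic deep inside, which is possible since $\Omega$ is pseudoconvex and bounded) so that $-(-\tilde\rho)^{\tilde\eta}$ is plurisubharmonic on all of $\Omega$ without changing it near the boundary. Then $\tilde\eta$ is a Diederich--Forn\ae ss exponent; letting $\nu\to 0$ and taking the supremum over $\psi$ finishes the proof.

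The first direction is the ``clean'' half — essentially the composite Lemma \ref{calc} $\to$ Lemma \ref{2.2} — and I expect no real difficulty there beyond the harmless passage from strict to non-strict inequalities. The main obstacle is the second direction: Lemma \ref{2.5} only certifies strict positivity of the one genuinely dangerous $2\times 2$ block $\{L,N\}$, in a single coordinate patch, so the work is to (i) simultaneously control every complex-tangential direction near the degenerate set $\Sigma$, which is where the decoupling of Lemma \ref{basic2} must be used carefully, (ii) patch the finitely many charts $U_\alpha$ (localization being unavoidable by the hairy-ball theorem, as remarked before Definition \ref{dfb}), and (iii) glue to the interior of $\Omega$. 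The auxiliary parameter $\nu$, i.e.\ settling for $\tilde\eta$ strictly below $\eta$, is exactly the slack that makes the patching errors absorbable — and is the structural reason the equality of the two indices is only visible after passing to suprema.
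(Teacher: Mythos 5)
Your proposal is correct and takes essentially the same route as the paper's own proof: the direction $\sup_\rho\tau_\rho\le\sup_\psi\eta_\psi$ via Lemma \ref{calc} and Lemma \ref{2.2}, and the converse via Lemma \ref{2.5}, chart-patching, and the classical filling-the-hole modification in the interior, with the $\nu$-slack absorbed by taking suprema. If anything, you supply details (strict versus non-strict inequalities, positivity in the remaining tangential directions) that the paper's brief proof leaves implicit.
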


\begin{proof}
	We denote $N_r$ by $N$. For this proof, let $\eta_0$ be the Diederich--Forn\ae ss index of $\Omega$ and $\eta_1$ be the Diederich--Forn\ae ss index for $\partial\Omega$. We know for any $\eta<\eta_0$ there exists $\rho$ so that $-(-\rho)^\eta$ is plurisubharmonic. Particularly, for any smooth $(1,0)$ tangent vector field $L$ on $U$, we have \[\Hessian_{-(-\rho)^\eta}(aL+bN, aL+bN)>0\] for all $(a,b)\in\mathbb{C}^2\backslash(0,0)$. By Lemma \ref{2.2}, it implies\[\begin{split}
	&\left(\frac{1}{1-\eta}-1\right)\left|(\overline{L}\psi)(Nr)+\Hessian_r(N, L)\right|^2+\frac{\|\nabla r\|}{2}\Bigg(\frac{\|\nabla r\|}{2}\Hessian_\psi(L, L)+g(\nabla_L\nabla_N\nabla r, L)\\&+\|\nabla r\|L\left(\frac{1}{\|\nabla r\|}\right)\Hessian_r(N, L)\Bigg)\leq 0\end{split}
	\] on $\Sigma_L$. We proved that $\eta_1$ is greater than $\eta_0$. 
	
	We are going to show that $\eta_0$ is greater than $\eta_1$. By Lemma \ref{2.5}, we find out for any $\eta<\eta_1$, we have \[\Hessian_{-(-\rho)^{\tilde{\eta}}}(aL+bN,aL+bN)>0\] on all $U_\alpha\cap\Omega$, for all smooth $(1, 0)$ tangent vector fields in $U$ satisfying $g(L, L)=\frac{1}{2}$ in $U_\alpha$ and all $(a,b)\in\mathbb{C}^2\backslash(0,0)$, where $\tilde{\eta}=\frac{1}{1-\eta}-\nu$ for arbitrary small $\nu>0$. Thus $-(-\rho)^{\tilde{\eta}}$ is plurisubharmonic near $\partial\Omega$ in $\Omega$. By compactness and a filling a hole argument (see \citep{DF77b}), since $\nu$ is arbitrarily small, we know that $\eta_0$ is greater than $\eta_1$, which completes the proof.
\end{proof}

In $\mathbb{C}^2$, the hairy ball theorem cannot happen. Indeed,  \[L=\frac{1}{\sqrt{|\frac{\partial r}{\partial z}|^2+|\frac{\partial r}{\partial w}|^2}}(\frac{\partial r}{\partial w}\frac{\partial}{\partial z}-\frac{\partial r}{\partial z}\frac{\partial}{\partial w})\] is a well globally-defined $(1,0)$ tangent vector field in $U$. It also satisfies $g(L, L)=\frac{1}{2}$. Hence, for $\mathbb{C}^2$, we have the following corollary.

\begin{corollary}\label{c2}
	Let $\Omega$ be a bounded pseudoconvex domain with smooth boundary in $\mathbb{C}^2$ defined by $r$. Let $\Sigma$ be the set of Levi-flat points. The Diederich--Forn\ae ss index of $\Omega$ is $\eta_0$ if and only if for any $\eta\in(0, \eta_0)$, there exists a smooth $\psi$ defined on a neighborhood of $\Sigma$ in $\partial\Omega$ so that on $\Sigma$, \[\begin{split}
	&\left(\frac{1}{1-\eta}-1\right)\left|(\overline{L}\psi)(N_rr)+\Hessian_r(N_r, L)\right|^2+\frac{\|\nabla r\|}{2}\Bigg(\frac{\|\nabla r\|}{2}\Hessian_\psi(L, L)+g(\nabla_L\nabla_{N_r}\nabla r, L)\\&+\|\nabla r\|L\left(\frac{1}{\|\nabla r\|}\right)\Hessian_r(N_r, L)\Bigg)\leq 0,
	\end{split}
	\] holds for one smooth non-vanishing $(1,0)$ tangent vector field $L$ of $\partial\Omega$.
\end{corollary}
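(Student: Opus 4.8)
The plan is to read the corollary off Theorem \ref{equiv} together with the feature of $\mathbb{C}^2$ pointed out just above it: since the $(1,0)$-tangent distribution on $\partial\Omega$ is a complex line bundle, the globally defined field $L$ displayed above is a non-vanishing $(1,0)$-tangent frame with $g(L,L)=\tfrac12$, so none of the coordinate-patch, partition-of-unity, or hairy-ball bookkeeping of Section \ref{sec2} is needed in two variables. First I would apply Theorem \ref{equiv} to replace the Diederich--Forn\ae ss index of $\Omega$ by the hypersurface index of $\partial\Omega$ from Definition \ref{dfb}. This reduces the corollary to the statement that, in $\mathbb{C}^2$, the defining inequality of Definition \ref{dfb} quantified over \emph{all} smooth $(1,0)$-tangent fields is equivalent to the same inequality, evaluated on $\Sigma$, for the single field $L$ (equivalently, for any one non-vanishing $(1,0)$-tangent field).

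The core of the argument is a homogeneity computation. Every smooth $(1,0)$-tangent field on $\partial\Omega$ is $L'=fL$ for a unique smooth function $f$; hence $\Sigma_{L'}=\Sigma_L\cup\{f=0\}$, and in $\mathbb{C}^2$ the locus $\Sigma_L$ is precisely the Levi-flat set $\Sigma$, because there the Levi form at a point is the single scalar $\Hessian_r(L,L)$ up to a positive factor. I would then check term by term that, under $L\mapsto L'=fL$, each of $\big|(\overline{L}\psi)(N_r r)+\Hessian_r(N_r,L)\big|^2$, $\Hessian_\psi(L,L)$, $g(\nabla_L\nabla_{N_r}\nabla r,L)$, and $\|\nabla r\|\,L\!\left(\tfrac1{\|\nabla r\|}\right)\Hessian_r(N_r,L)$ picks up the single positive factor $|f|^2$. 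The only place needing a word is the first term: $\overline{L'}=\bar f\,\overline{L}$ and $\Hessian_r(N_r,L')=\bar f\,\Hessian_r(N_r,L)$, so both summands inside the modulus scale by $\bar f$ — not oppositely — and the modulus squared scales by $|f|^2$. Thus on $\Sigma=\Sigma_L$ the entire left-hand side of the Definition \ref{dfb} inequality for $L'$ is $|f|^2$ times that for $L$, while on $\Sigma_{L'}\setminus\Sigma\subseteq\{f=0\}$ every term vanishes identically and the inequality holds automatically.

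Combining these observations, the Definition \ref{dfb} inequality holds on $\Sigma_{L'}$ for all $(1,0)$-tangent $L'$ if and only if it holds on $\Sigma$ for $L$, and — dividing by $|f|^2>0$ where $f$ does not vanish — if and only if it holds on $\Sigma$ for any one non-vanishing $(1,0)$-tangent field. Therefore the number $\eta_\psi$ of Definition \ref{dfb} equals the supremum of $\eta\in(0,1)$ for which the displayed inequality holds on $\Sigma$ for one non-vanishing $(1,0)$-tangent field; taking $\sup_\psi$ and invoking Theorem \ref{equiv} yields the corollary. A final cosmetic point is passing between the ``$\sup$'' description of the index and the ``for all $\eta\in(0,\eta_0)$'' phrasing of the statement: since $\eta\mapsto\tfrac1{1-\eta}-1$ is increasing, the admissible $\eta$ for a fixed $\psi$ form an interval $(0,\eta_\psi)$, and the two phrasings agree.

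I do not anticipate a real obstacle; the corollary is essentially Theorem \ref{equiv} transcribed to $n=2$, where the Levi distribution is trivializable. The only point where care is required is the scaling of the first term, where one must not let $\overline{L}\psi$ and $\Hessian_r(N_r,L)$ appear to transform contragrediently: both are conjugate-linear in the tangent field, so the potential cancellation that would break $|f|^2$-homogeneity simply does not arise.
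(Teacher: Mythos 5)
Your argument is correct and follows the same route the paper intends: the paper gives no separate proof of this corollary, relying on Theorem \ref{equiv} together with the remark that in $\mathbb{C}^2$ the displayed field $L$ is a global non-vanishing normalized $(1,0)$-tangent frame. Your explicit $|f|^2$-homogeneity check (writing an arbitrary tangent field as $fL$ and noting each term of the inequality, including the mixed term, scales by $|f|^2$, with the extra points of $\Sigma_{fL}$ lying in $\{f=0\}$ where everything vanishes) is exactly the detail the paper leaves unstated, so this is a faithful filling-in rather than a different proof.
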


\section{Diederich--Forn\ae ss index of $\beta$-worm domains}\label{sec3}

\begin{definition}\label{worm}
	The $\beta$- worm domain $\Omega_\beta$ is defined by \[\lbrace (z, w)\in\mathbb{C}^2: r(z, w)=\left|z-e^{i\log|w|^2}\right|^2<1-\eta(\log|w|^2)\rbrace,\] where $\eta: \mathbb{R}\mapsto\mathbb{R}$ is a fixed smooth function with the following properties:	\begin{enumerate}
		\item $\eta(x)\geq 0$, $\eta$ is even and convex.
		\item $\eta^{-1}(0)=I_{\beta-\pi/2}=[-\beta+\pi/2, \beta-\pi/2]$.
		\item there exists and $a>0$ such that $\eta(x)>1$ if $x<-a$ or $x>a$.
		\item $\eta'(x)\neq 0$ if $\eta(x)=1$.
	\end{enumerate}
\end{definition}

In this section, we seek the Diederich--Forn\ae ss index of $\beta$-worm domain. We can see that by varying $\beta$, the worm domains indeed exhaust all possibilities of non-trivial indexes. 

Let $\Sigma$ be Levi-flat set of $\partial\Omega_\beta$. It is well-known that, \[\Sigma=\lbrace (0, w): \left|\log|w|^2\right|\leq \beta-\frac{\pi}{2}\rbrace.\] For the following standard calculations of $\beta$-worm domains, readers are referred to \citep{DF77a} and \citep{KP08}. By a straightforward calculations, one can see that the following: \begin{enumerate}
	\item \[\frac{\partial r}{\partial z}=\bar{z}+e^{-i\log|w|^2}\qquad\text{and}\qquad\frac{\partial r}{\partial w}=\frac{2i}{w}\Re(\bar{z}e^{i\log|w|^2});\] 
	\item \[\frac{\partial^2 r}{\partial z\partial\bar{z}}=1,\qquad\frac{\partial^2 r}{\partial z\partial\bar{w}}=\frac{-i}{\bar{w}}e^{-i\log|w|^2}\quad\text{and}\quad\frac{\partial^2 r}{\partial w\partial\bar{w}}=\frac{-2}{|w|^2}\Re(\bar{z}e^{i\log|w|^2}).\]
\end{enumerate}  On $\Sigma$, $z=0$. We can further simplify the calculation above and obtain that
\begin{enumerate}
	\item \[\frac{\partial r}{\partial z}=e^{-i\log|w|^2}\qquad\text{and}\qquad\frac{\partial r}{\partial w}=0;\] 
	\item \[\frac{\partial^2 r}{\partial z\partial\bar{z}}=1,\qquad\frac{\partial^2 r}{\partial z\partial\bar{w}}=\frac{-i}{\bar{w}}e^{-i\log|w|^2}\quad\text{and}\quad\frac{\partial^2 r}{\partial w\partial\bar{w}}=0.\]
\end{enumerate}

This gives, 
\[N_r=e^{i\log|w|^2}\frac{\partial}{\partial z}\qquad\text{and}\qquad\|\nabla r\|=2.\]  

 Since $\beta$-worm domains are defined in $\mathbb{C}^2$, by Corollary \ref{c2}, we have the following lemma.
 
 \begin{lemma}
 	Let $\Omega_\beta$, $r$ and $\Sigma$ defined above. The Diederich--Forn\ae ss index of $\Omega_\beta$ is $\eta_0$ if and only if for any $\eta\in (0,\eta_0)$, there exists a smooth function defined in a neighborhood of $\Sigma$ in $\partial\Omega$ so that on $\Sigma$, \begin{equation}\label{start}
 	\left(\frac{1}{1-\eta}-1\right)\left|(\overline{L}\psi)(N_rr)+\Hessian_r(N_r, L)\right|^2+\frac{\|\nabla r\|}{2}\left(\frac{\|\nabla r\|}{2}\Hessian_\psi(L, L)+g(\nabla_L\nabla_{N_r}\nabla r, L)\right)\leq 0,
 	\end{equation}
 	holds for \[L=e^{i\log |w|^2}\left(\frac{\partial r}{\partial w}\frac{\partial}{\partial z}-\frac{\partial r}{\partial z}\frac{\partial}{\partial w}\right).\]
 \end{lemma}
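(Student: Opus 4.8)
The plan is to obtain this lemma as the special case of Corollary \ref{c2} for $\Omega=\Omega_\beta$. First I would note that $\Omega_\beta$ is a bounded pseudoconvex domain with smooth boundary in $\mathbb{C}^2$ (see \citep{DF77a},\citep{KP08}), and that, since $n=2$, at each boundary point the Levi form is a single nonnegative number, so the set where it degenerates coincides with the Levi-flat set $\Sigma=\{(0,w):|\log|w|^2|\le\beta-\pi/2\}$ recorded above. Corollary \ref{c2} then asserts that the Diederich--Forn\ae ss index of $\Omega_\beta$ is $\eta_0$ if and only if, for every $\eta\in(0,\eta_0)$, there is a smooth $\psi$ on a neighborhood of $\Sigma$ in $\partial\Omega_\beta$ so that the hypersurface inequality of Corollary \ref{c2} holds on $\Sigma$ for one non-vanishing smooth $(1,0)$ tangent field of $\partial\Omega_\beta$. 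It therefore remains to check two things: (a) that on $\Sigma$ the term $\|\nabla r\|\,L\bigl(1/\|\nabla r\|\bigr)\Hessian_r(N_r,L)$ drops, turning the inequality of Corollary \ref{c2} into (\ref{start}); and (b) that the condition is insensitive to the choice of non-vanishing $(1,0)$ tangent field, so that it may be tested on the particular field $L=e^{i\log|w|^2}\bigl(\tfrac{\partial r}{\partial w}\tfrac{\partial}{\partial z}-\tfrac{\partial r}{\partial z}\tfrac{\partial}{\partial w}\bigr)$.

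For (a): at a point $(0,w_0)\in\Sigma$ one has $\partial r/\partial z=e^{-i\log|w_0|^2}$ and $\partial r/\partial w=0$, so the complex tangent space of $\partial\Omega_\beta$ there is spanned by $\partial/\partial w$; hence every $(1,0)$ tangent field with $Lr=0$ restricts on $\Sigma$ to a multiple of $\partial/\partial w$. Moreover $\|\nabla r\|=2$ along the whole complex hyperplane $\{z=0\}$ (which contains $\Sigma$), so $1/\|\nabla r\|$ is constant there and its $\partial/\partial w$-derivative vanishes on $\Sigma$; thus $L\bigl(1/\|\nabla r\|\bigr)=0$ on $\Sigma$ and the offending term disappears, reducing the inequality of Corollary \ref{c2} to (\ref{start}). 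This is exactly the simplification recorded in the Remark after Definition \ref{dfb}.

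For (b): if $L$ and $L'$ are any two non-vanishing smooth $(1,0)$ tangent fields of $\partial\Omega_\beta$, write $L'=hL$ with $h$ a nowhere-vanishing smooth function. For a fixed $\psi$ and fixed defining function, the left-hand side of (\ref{start}) at a point $p\in\Sigma$ depends only on the value of the tangent field at $p$ (each occurrence of the field enters through a first-order operator or a tensorial slot) and scales by $|c|^2$ when that value is rescaled by $c\in\mathbb{C}\setminus\{0\}$ — indeed $\Hessian_\psi(L,L)$ is Hermitian in $L$, $g(\nabla_L\nabla_{N_r}\nabla r,L)$ scales by $|h|^2$, and $(\overline{L}\psi)(N_rr)+\Hessian_r(N_r,L)$ scales by $\bar h$, so its squared modulus scales by $|h|^2$ as well. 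Consequently the left-hand side of (\ref{start}) for $L'$ equals $|h|^2$ times that for $L$, pointwise on $\Sigma$, a strictly positive multiple; so (\ref{start}) holds on $\Sigma$ for $L'$ with a given $\psi$ exactly when it holds for $L$ with the same $\psi$. The field $L=e^{i\log|w|^2}\bigl(\tfrac{\partial r}{\partial w}\tfrac{\partial}{\partial z}-\tfrac{\partial r}{\partial z}\tfrac{\partial}{\partial w}\bigr)$ is smooth near $\partial\Omega_\beta$ (where $w\neq0$), satisfies $Lr=e^{i\log|w|^2}\bigl(\tfrac{\partial r}{\partial w}\tfrac{\partial r}{\partial z}-\tfrac{\partial r}{\partial z}\tfrac{\partial r}{\partial w}\bigr)=0$ so it is $(1,0)$-tangent to $\partial\Omega_\beta$, and is non-vanishing there since its coefficient vector $(\partial r/\partial w,-\partial r/\partial z)$ can vanish only where $\nabla r=0$. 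Hence it is an admissible test field, and combining this with (a) and Corollary \ref{c2} gives the lemma.

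I expect (b) to be the only point needing real care: one must confirm the precise homogeneity of the left-hand side of (\ref{start}) in the tangent field and the fact that it sees the field only through its restriction to $\Sigma$ — this is what makes the nonstandard normalization of the chosen $L$ (it satisfies $g(L,L)=\tfrac12$ only along $\Sigma$, not throughout $U$) harmless. Everything else is a direct substitution of the worm data already assembled above.
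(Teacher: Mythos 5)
Your proposal is correct and follows the same route as the paper, which obtains this lemma directly from Corollary \ref{c2} together with the worm-domain computations on $\Sigma$ (where $N_r=e^{i\log|w|^2}\partial/\partial z$ and $\|\nabla r\|=2$). The two points you verify explicitly --- that $L\left(\frac{1}{\|\nabla r\|}\right)\Hessian_r(N_r,L)$ vanishes on $\Sigma$ (the Remark after Definition \ref{dfb}) and that the inequality is pointwise $|h|^2$-homogeneous in the tangent field, so the unnormalized $L=e^{i\log|w|^2}\left(\frac{\partial r}{\partial w}\frac{\partial}{\partial z}-\frac{\partial r}{\partial z}\frac{\partial}{\partial w}\right)$ is an admissible test field --- are exactly the details the paper leaves implicit, and your treatment of them is sound.
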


We calculate the quantities in the preceding lemma. On $\Sigma$, 
 \[L=\frac{\partial}{\partial w},\qquad\Hessian_r(N_r, L)=\frac{-i}{\bar{w}}\quad\text{and}\quad g(\nabla_L\nabla_{N_r}\nabla r, L)=0.\] Thus defining $\alpha:=\frac{1}{1-\eta}-1$, (\ref{start}) can be simplified to 
 \begin{equation}\label{cont}
 	\alpha\left|\frac{\partial\psi}{\partial \bar{w}}-\frac{i}{\bar{w}}\right|^2+\frac{1}{4}\Delta\psi\leq 0.
 \end{equation}

To find the optimal $\alpha$, we need a basic lemma from ordinary differential equations.

\begin{lemma}[Riccati equations]\label{riccati}
	Consider the following type Riccati equation: \[s'(t)=-as^2(t)-\frac{s(t)}{t}-\frac{b}{t^2}\] for $a,b>0$ and $t>0$. Then the solution is \[s=\sqrt{\frac{b}{a}}\cdot\frac{\cot(\sqrt{ab}\log t+\theta)}{t}\] for arbitrary $\theta$.
\end{lemma}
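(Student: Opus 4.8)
The statement is a pure fact about ordinary differential equations, so the plan is to reduce the Riccati equation to a separable first‑order equation by the substitution that exploits its scaling structure. Concretely, I would set $v(t)=t\,s(t)$, so that $v'=s+t\,s'$. Substituting $s=v/t$ and $s'=(v'-v/t)/t$ into $s'=-as^2-s/t-b/t^2$ and then multiplying through by $t$ makes all of the linear‑in‑$s$ clutter cancel, leaving
\[
v'=-\frac{a v^2+b}{t}.
\]
This cancellation is exactly why the rescaling $v=ts$ is the right move here.

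Next I would separate variables and integrate: $\dfrac{dv}{av^2+b}=-\dfrac{dt}{t}$. Since $\displaystyle\int\frac{dv}{av^2+b}=\frac{1}{\sqrt{ab}}\arctan\!\Big(\sqrt{\tfrac{a}{b}}\,v\Big)$, this yields $\arctan\!\big(\sqrt{a/b}\,v\big)=-\sqrt{ab}\,\log t+c$ for a free constant $c$, hence $v=\sqrt{b/a}\,\tan\!\big(c-\sqrt{ab}\,\log t\big)$. Using the identity $\tan\!\big(\tfrac{\pi}{2}-y\big)=\cot y$ and letting the arbitrary constant absorb the phase, I rewrite this as $v=\sqrt{b/a}\,\cot\!\big(\sqrt{ab}\,\log t+\theta\big)$ with $\theta$ arbitrary, and dividing by $t$ recovers the asserted formula $s=\sqrt{b/a}\cdot\dfrac{\cot(\sqrt{ab}\,\log t+\theta)}{t}$.

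I would also record that, once the formula is written down, a one‑line direct check closes the argument independently: with $C=\sqrt{b/a}$ and $\phi=\sqrt{ab}\,\log t+\theta$ one gets $s'=-C t^{-2}\big(\cot\phi+\sqrt{ab}\,\csc^2\phi\big)$, while $-as^2-s/t-b/t^2=-t^{-2}\big(C\cot\phi+b\,\csc^2\phi\big)$, and these agree because $aC^2=b$, $\sqrt{ab}\,C=b$, and $\cot^2\phi+1=\csc^2\phi$. Since the equation is first order, the one‑parameter family indexed by $\theta$ is the complete solution set on each interval on which $\cot(\sqrt{ab}\,\log t+\theta)$ is finite, so "the solution" is to be read in this local, one‑parameter sense.

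As for difficulty, there is essentially no obstacle. The only bookkeeping that needs a little attention is the passage from the $\arctan$ primitive to a cotangent — the $\pi/2$ shift and any sign are harmless precisely because $\theta$ is free — and the harmless caveat that the representation is valid only away from the singularities of $\cot$. Everything else is routine separation of variables.
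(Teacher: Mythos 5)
Your proof is correct, and it takes a genuinely different route from the paper. The paper uses the classical Riccati linearization: it sets $as=u'/u$, which turns the equation into the second-order Euler equation $t^2u''+tu'+abu=0$, writes the general solution as a phase-shifted sine $u=\sqrt{C_1^2+C_2^2}\,\sin(\sqrt{ab}\log t+\theta)$, and recovers $s$ as $\frac{1}{a}\frac{u'}{u}$, which produces the cotangent directly. You instead exploit the scaling structure with $v=ts$, which collapses the equation to the separable first-order ODE $v'=-(av^2+b)/t$, integrate via the arctangent primitive, and convert $\tan$ to $\cot$ by absorbing the $\pi/2$ shift into the free phase $\theta$; your computation of $v'=-(av^2+b)/t$ and of the antiderivative $\frac{1}{\sqrt{ab}}\arctan(\sqrt{a/b}\,v)$ are both right, and your closing direct verification (using $aC^2=b$, $C\sqrt{ab}=b$, $\cot^2\phi+1=\csc^2\phi$) is a nice independent check. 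Your route is more elementary (it never leaves first-order equations) and, since $av^2+b>0$ for $a,b>0$, separation of variables loses no solutions, so your one-parameter family is indeed the complete local solution set — matching what the linearization in the paper gives via the two-dimensional solution space of the Euler equation. The only point to keep in mind, which you already flag, is that the formula is valid on intervals avoiding the zeros of $\sin(\sqrt{ab}\log t+\theta)$, which is exactly how the lemma is used later in the comparison argument for the worm domain.
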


\begin{proof}
	First, we make the substitution: $as=\frac{u'}{u}$. It becomes \[u''+\frac{u'}{t}+\frac{abu}{t^2}=0.\]This is equivalent to the second-order Euler equation:\[t^2+tu'+abu=0.\] Thus the solution is \[u=C_1\sin(\sqrt{ab}\log t)+C_2\cos(\sqrt{ab}\log t),\] where $C_1, C_2$ are arbitrary constants. 
	
	We simplify the solution above \[u=C_1\sin(\sqrt{ab}\log t)+C_2\cos(\sqrt{ab}\log t)=\sqrt{C_1^2+C_2^2}\cdot\sin(\sqrt{ab}\log t+\theta),\] where $\cos\theta=\frac{C_1}{\sqrt{C_1^2+C_2^2}}$ and $\sin\theta=\frac{C_2}{\sqrt{C_1^2+C_2^2}}$. Thus \[s=\sqrt{\frac{b}{a}}\cdot\frac{\cot(\sqrt{ab}\log t+\theta)}{t}.\]
\end{proof}

We now use polar coordinates. Let $w=re^{i\phi}$, and then by (\ref{cont}), we obtain that \[\frac{\alpha f_r^2}{4}+\frac{\alpha f_\phi^2}{4r^2}+\frac{\alpha}{r^2}-\frac{\alpha f_\phi}{r^2}+\frac{f_{rr}}{4}+\frac{f_r}{4r}+\frac{f_{\phi\phi}}{4r^2}\leq 0.\] Here we replace the function $\psi$ in (\ref{cont}) with $f$ to avoid the confusion of $\psi$ and the angle variable $\phi$.

Integral both sides and we have that 
\[\int_{0}^{2\pi}\frac{\alpha f_r^2}{4}\, d\phi+\int_{0}^{2\pi}\frac{\alpha f_\phi^2}{4r^2}\, d\phi+\int_{0}^{2\pi}\frac{\alpha}{r^2}\, d\phi-\int_{0}^{2\pi}\frac{\alpha f_\phi}{r^2}\, d\phi+\int_{0}^{2\pi}\frac{f_{rr}}{4}\, d\phi+\int_{0}^{2\pi}\frac{f_r}{4r}\, d\phi+\int_{0}^{2\pi}\frac{f_{\phi\phi}}{4r^2}\, d\phi\leq 0.\]
By Schwarz's lemma
\[\int_{0}^{2\pi}\,d\phi\cdot \int_{0}^{2\pi}f_r^2\,d\phi\geq\left(\int_{0}^{2\pi}f_r\,d\phi\right)^2,\]
and note that $\int_{0}^{2\pi}f_\phi\,d\phi=\int_{0}^{2\pi} f_{\phi\phi}\, d\phi=0$. We then have that 
\[\frac{\alpha}{8\pi}\left(\int_{0}^{2\pi}f_r\, d\phi\right)^2+\frac{2\alpha\pi}{r^2}+\frac{1}{4}\int_{0}^{2\pi}f_{rr}\, d\phi+\frac{1}{4r}\int_{0}^{2\pi}f_r\, d\phi\leq 0.\]

We define that $F(r)=\frac{1}{2\pi}\int_{0}^{2\pi} f(r,\phi)\, d\phi$. It is clear that $F(r)$ is smooth on $\Sigma$. That means $F_r$ is smooth on $[e^{\frac{\pi}{4}-\frac{\beta}{2}}, e^{\frac{\beta}{2}-\frac{\pi}{4}}]$. Interchanging $\frac{\partial}{\partial r}$ and integral sign, we have that
\[\frac{2\alpha\pi}{4} F_r^2+\frac{2\alpha\pi}{r^2}+\frac{2\pi F_{rr}}{4}+\frac{2\pi F_r}{4r}\leq 0.\]
Let $s=F_r$, we have that
\[s'+\alpha s^2+\frac{s}{r}+\frac{4\alpha}{r^2}\leq 0.\]
By the comparison principal of ordinary differential equation, suppose that $s(r_0)=s_0$, we have that 
\[s\leq 2\frac{\cot(2\alpha\log r+\theta)}{r},\] where $\theta$ is a constant such that \[ 2\frac{\cot(2\alpha\log r_0+\theta)}{r_0}=s_0.\] Since the cotangent function has a period $\pi$, by $\frac{\pi}{4}-\frac{\beta}{2}\leq\log r\leq\frac{\beta}{2}-\frac{\pi}{4}$, we have that $2\alpha(\beta-\frac{\pi}{2})<\pi$. Thus we obtain that $\alpha<\frac{\pi}{2\beta-\pi}$. The corresponding $\eta$ satisfies $\eta<\pi/{(2\beta)}$. Thus we improved the upper bound of Diederich--Forn\ae ss index of $\Omega_\beta$. It should be less or equal to $\pi/{(2\beta)}$.

We are going to show this constant is sharp. If for any $\alpha<\frac{\pi}{2\beta-\pi}$, we can construct a bounded smooth solution of equation \[\frac{\alpha f_r^2}{4}+\frac{\alpha f_\phi^2}{4r^2}+\frac{\alpha}{r^2}-\frac{\alpha f_\phi}{r^2}+\frac{f_{rr}}{4}+\frac{f_r}{4r}+\frac{f_{\phi\phi}}{4r^2}=0,\] then we are done. This is because we can extend this function to a neighborhood of $\Sigma$. We will suppose $f(r,\phi)$ only depends on $r$. Then the equation becomes \[\frac{\alpha f_r^2}{4}+\frac{\alpha}{r^2}+\frac{f_{rr}}{4}+\frac{f_r}{4r}=0.\] We define $s=f_r$ and this differential equation becomes a Riccati equation again. The solution for $s$ is \[s=2\frac{\cot(2\alpha\log r+\theta)}{r}\] for some $\theta$. This is well-defined and bounded on $\Sigma$ because $\alpha<\frac{\pi}{2\beta-\pi}$. Hence, we have the following theorem.

\begin{theorem}\label{calcworm}
	Let $\Omega_\beta$ be the $\beta$-worm domain. The Diederich--Forn\ae ss index of $\Omega_\beta$ is $\pi/{(2\beta)}$.
\end{theorem}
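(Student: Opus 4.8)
The plan is to use Corollary \ref{c2} to convert the index computation for $\Omega_\beta$ into a question about a single scalar PDE inequality on the Levi-flat set $\Sigma = \{(0,w): |\log|w|^2|\le \beta-\pi/2\}$, and then to analyze that inequality through the Riccati equation of Lemma \ref{riccati}. First I would record the boundary data for the given defining function $r(z,w)=|z-e^{i\log|w|^2}|^2 - 1 + \eta(\log|w|^2)$: on $\Sigma$ one computes $N_r = e^{i\log|w|^2}\partial_z$, $\|\nabla r\| = 2$, and for $L = e^{i\log|w|^2}(\partial_w r\,\partial_z - \partial_z r\,\partial_w)$ one gets $L = \partial_w$, $\Hessian_r(N_r,L) = -i/\bar w$, $g(\nabla_L\nabla_{N_r}\nabla r, L) = 0$. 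Substituting into Corollary \ref{c2} and writing $\alpha := \frac{1}{1-\eta}-1 = \frac{\eta}{1-\eta}$, the defining inequality collapses to
\[
\alpha\,\Bigl|\frac{\partial\psi}{\partial\bar w} - \frac{i}{\bar w}\Bigr|^2 + \frac14\,\Delta\psi \le 0 \qquad \text{on }\Sigma .
\]
Since $\alpha < \frac{\pi}{2\beta-\pi}$ is equivalent to $\eta < \frac{\pi}{2\beta}$, the theorem reduces to identifying the supremal $\alpha$ for which this inequality has a smooth solution $\psi$ near $\Sigma$.

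For the \emph{upper bound} I would pass to polar coordinates $w = re^{i\phi}$, rewrite the inequality with $f$ in place of $\psi$, integrate in $\phi$ over $[0,2\pi]$, use $\int f_\phi\,d\phi = \int f_{\phi\phi}\,d\phi = 0$, discard the nonnegative term $\int \alpha f_\phi^2/(4r^2)\,d\phi$, and apply the Cauchy--Schwarz inequality to $\int f_r^2\,d\phi$. With $F(r) := \frac{1}{2\pi}\int_0^{2\pi} f(r,\phi)\,d\phi$ (which is smooth for $\log r \in [\tfrac\pi4 - \tfrac\beta2,\, \tfrac\beta2 - \tfrac\pi4]$) and $s := F_r$, this produces the differential inequality $s' + \alpha s^2 + \tfrac{s}{r} + \tfrac{4\alpha}{r^2} \le 0$. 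Comparison with Lemma \ref{riccati} for $a = \alpha$, $b = 4\alpha$ (so $\sqrt{ab} = 2\alpha$, $\sqrt{b/a} = 2$) gives $s \le 2\cot(2\alpha\log r + \theta)/r$ on that interval; since $s = F_r$ is finite throughout the closed interval of length $\beta - \pi/2$, the cotangent factor cannot hit a pole there, forcing $2\alpha(\beta - \tfrac\pi2) < \pi$, hence $\alpha < \frac{\pi}{2\beta-\pi}$ and $\eta < \frac{\pi}{2\beta}$.

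For \emph{sharpness} I would, given any $\alpha < \frac{\pi}{2\beta-\pi}$, seek a radial solution $f = f(r)$ of the equation itself, which becomes $\tfrac14 f_{rr} + \tfrac14 r^{-1} f_r + \alpha r^{-2} + \tfrac14\alpha f_r^2 = 0$; setting $s = f_r$ this is again the Riccati equation of Lemma \ref{riccati}, with solution $s = 2\cot(2\alpha\log r + \theta)/r$, which for a suitable $\theta$ is smooth and bounded on all of $\Sigma$ exactly because $2\alpha(\beta-\tfrac\pi2) < \pi$. Integrating yields a bounded smooth $f$, which I extend to a smooth $\psi$ on a neighborhood of $\Sigma$ in $\partial\Omega_\beta$ realizing the exponent; combining with the upper bound gives the index exactly $\pi/(2\beta)$, and by Theorem \ref{equiv} this is the Diederich--Forn\ae ss index of $\Omega_\beta$.

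The step I expect to be the main obstacle is the upper bound: one must be confident that averaging out the angular variable and discarding $\int f_\phi^2\,d\phi$ retains all the relevant obstruction, and that the comparison principle is applied on the maximal admissible $r$-interval, so that the no-pole condition on the cotangent comes out as precisely $2\alpha(\beta - \pi/2) < \pi$ and not a weaker bound. The sharpness direction is comparatively routine once the explicit Riccati solution is in hand, apart from verifying that it stays inside the range where everything is smooth and that extending $\psi$ off $\Sigma$ does not spoil plurisubharmonicity of $-(-\rho)^{\eta}$ near $\partial\Omega_\beta$ — a point handled exactly as in the neighborhood and filling-in arguments already used for Theorem \ref{equiv}.
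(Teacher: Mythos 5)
Your proposal is correct and follows essentially the same route as the paper: reduce via Corollary \ref{c2} to the inequality $\alpha|\partial\psi/\partial\bar w - i/\bar w|^2 + \tfrac14\Delta\psi\le 0$ on $\Sigma$, average in the angular variable and apply Cauchy--Schwarz to get the Riccati-type differential inequality for $F_r$, compare with Lemma \ref{riccati} to force $2\alpha(\beta-\tfrac\pi2)<\pi$, and prove sharpness with the explicit radial Riccati solution extended off $\Sigma$. The computations, the choice of $L$, and the no-pole argument for the cotangent all match the paper's proof.
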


\begin{remark}
	This is the first time we have found accurate non-trivial Diederich--Forn\ae ss indexes in Euclidean spaces. This also shows the following statement: for arbitrary $\eta_0\in(0,1)$, $\Omega_{\pi/2\eta_0}$ is a bounded pseudoconvex domain of index $\eta_0$. Thus the Diederich--Forn\ae ss index is a continuum, not discrete.
\end{remark}

	\bigskip
	\bigskip
	\noindent {\bf Acknowledgments}. The author thanks to Dr. Steven Krantz for drawing his attention to this topic. The author thanks to Dr. Mei-Chu Chang, Dr. Siqi Fu and Dr. Michael Hartglass for their advice. The author also thanks to Dr. Masanori Adachi, Dr. Xinghong Pan, Dr. Marco Peloso, Dr. Lihan Wang, Dr. Bun Wong, Dr. Yuan Yuan, Dr. Qi S. Zhang and Dr. Meng Zhu for fruitful conversations. Last but not least, the author thank to Dr. Jeffery McNeal for helping make the references more accurate.
	
	\printbibliography
	
\end{document}